\newtheorem{teo}{Theorem}[section]
\newtheorem{lema}[teo]{Lemma}
\newtheorem{prop}[teo]{Proposition}
\newcommand{\C}{{\mathbb C}}
\newcommand{\E}{{\mathbf E}}
\newcommand{\N}{{\mathbb N}}
\newcommand{\R}{{\mathbb R}}
\newcommand{\sgn}{\operatorname{sgn}}
\def\1{{\rm l}\hskip -0.21truecm 1}
\begin{document}
\title{The complex Brownian motion as a strong limit of processes constructed from a Poisson process}
\date{}
\author{Xavier Bardina\footnote{X. Bardina is supported by the grant MTM2012-33937 from SEIDI,  Ministerio de Economia y
Competividad.}, Giulia Binotto$^\dagger$
 and Carles Rovira\footnote{ G. Binotto and C. Rovira are supported by the grant MTM2012-31192 from SEIDI,  Ministerio de Economia y Competividad.}}

\maketitle

$^*${\rm Departament de Matem\`atiques, Facultat de Ci\`encies,
Edifici C, Universitat Aut\`onoma de Barcelona, 08193 Bellaterra}.
{\tt Xavier.Bardina@uab.cat}
\newline
$\mbox{ }$\hspace{0.1cm} $^\dagger${\rm Facultat de Matem\`atiques,
Universitat de Barcelona, Gran Via 585, 08007 Barcelona}. {\tt gbinotto@ub.edu},  {\tt carles.rovira@ub.edu}

\begin{abstract}
We construct a family of processes, from a single
Poisson process, that converges in law to a complex Brownian motion.
Moreover, we find realizations of these processes that converge
almost surely to the complex Brownian motion, uniformly on the unit
time interval. Finally the rate of convergence is derived.

\end{abstract}

\section{Introduction}

 Kac \cite{K} in 1956 to
obtain a solution from a Poisson  of the telegraph equation
\begin{equation}\label{tele} \frac1v \frac{\partial^2 F}{\partial
t^2}=v\frac{\partial^2 F}{\partial x^2}-\frac{2a}{v}\frac{\partial
F}{\partial t},\end{equation} with $a,v>0$, introduced the processes
$$x(t)=v\int_0^t(-1)^{N_a(r)}dr,$$
where $N_a=\{N_a(t),\,t\geq0\}$ is a Poisson process of intensity
$a$. He noticed that if in  equation (\ref{tele}) the parameters $a$ and $v$ tend to
infinity with $\frac{2a}{v^2}$ constant and equal to $\frac1{D}$,
then the equation converges to the heat equation:
\begin{equation}\label{calor} \frac1{D}\frac{\partial F}{\partial t}=\frac{\partial^2
F}{\partial x^2}.\end{equation}
 Let $x_\varepsilon(t)$ be the processes considered by Kac with
$a=\frac1{\varepsilon^2}$, $v=\frac1{\varepsilon}$. These values
satisfy that $\frac{2a}{v^2}$ is constant and $D=\frac12$ and we get
in (\ref{calor}) an equation whose solution is a standard Brownian
motion.

Stroock \cite{S} proved in 1982 that the processes $x_{\varepsilon}$
converge in law to a standard Brownian motion. That is, if we
consider $(P^{\varepsilon})$ the image law of the process
$x_{\varepsilon}$ in the Banach space $\mathcal C([0,T])$ of
continuous functions on $[0,T]$, then $(P^{\varepsilon})$ converges
weakly, when $\varepsilon$ tends to zero, towards the Wiener
measure.
 Doing a change of variables, these processes
can be written as
$$x_{\varepsilon}=\left\{x_{\varepsilon}(t):=\varepsilon\int_0^{\frac{t}{\varepsilon^2}}(-1)^{N(u)}du,\,t\in[0,T]\right\},$$
where $\{N(t),\,t\geq0\}$ is a standard Poisson process.

In the mathematical  literature we find  generalizations with regard to the Stroock
result which can be channeled in three directions:
\begin{enumerate}[(i)]
 \item modifying the processes $x_{\varepsilon}$ in order to obtain
approximations of other Gaussian processes, \item proving
convergence in a stronger sense that the convergence in law in the
space of continuous functions,  \item weakening the conditions of
the approximating processes.
\end{enumerate}

In direction (i), a first generalization is  also made by Stroock
\cite{S} who modified the processes $x_{\varepsilon}$ to obtain
approximations of stochastic differential equations.  There are also
generalizations, among others, to the fractional Brownian motion
(fBm) \cite{LD}, to a general class of Gaussian processes (that
includes fBm) \cite{DJ}, to a fractional stochastic differential
equation \cite{BNRT},  to the stochastic heat equation driven by
Gaussian white noise \cite{BJQ} or to  the Stratonovich heat
equation \cite{DJQ}.

On the other hand, there is some literature  where the authors  present
realizations of the processes that converge almost surely,  uniformly on the unit time interval. These processes are usually called as uniform
transport processes. Since the approximations always start increasing, a modification
of the processes  as
\begin{equation*}
\tilde
x_{\varepsilon}(t)=\varepsilon(-1)^{A}\int_0^{\frac{t}{\varepsilon^2}}(-1)^{N(u)}du,\end{equation*}
has to be considered where $A\sim
\textrm{Bernoulli}\left(\frac12\right)$  independent of the Poisson
process $N$.

Griego, Heath and Ruiz-Moncayo  \cite{art G-H-RM}  showed that these
processes converge strongly and uniformly on bounded time intervals
to Brownian motion. In  \cite{art G-G2} Gorostiza and Griego
extended the result to diffusions. Again Gorostiza and Griego
\cite{art G-G} and Cs\"{o}rg\H{o} and Horv\'ath \cite{CH} obtained a
rate of convergence. More precisely, in \cite{art G-G} it is proved
that there exist versions of the transport processes $\tilde
x_{\varepsilon}$ on the same probability space as a given Brownian
motion $(W(t))_{t\geq0}$ such that, for each $q > 0$,
$$P\left(\sup_{a\leq t\leq b}|W(t)-\tilde x_{\varepsilon}(t)|> {C}\varepsilon\left(\log
\frac1{\varepsilon^2}\right)^{\frac52}
\right)=o\left({{\varepsilon}^{2q}}\right),$$ as $\varepsilon\to 0$
and where $C$ is a positive constant depending on $a$, $b$ and $q$.
Garz\'on, Gorostiza and Le\'on \cite{GGL} defined a sequence of
processes that converges strongly to fractional Brownian motion
uniformly on bounded intervals, for any Hurst parameter $H\in(0,1)$
and computed the rate of convergence. In \cite{GGL2} and \cite{GGL3}
the same authors deal with subfractional Brownian motion and
fractional stochastic differential equations.

Since
$$(-1)^{N(u)}=e^{i \pi N(u)}=\cos(\pi N(u)),$$
 the question that if the convergence is also true with
other angles appears.
Bardina \cite{B}  showed that if we consider
\begin{equation}\label{dedede}\bar x^\theta_{\varepsilon}(t)={\varepsilon}\int_0^{\frac{2t}{\varepsilon^2}}e^{i \theta N_s}ds
\end{equation}
where $\theta\neq0,\pi,$ the laws of the processes converge weakly
towards the law of a complex Brownian motion, i.e., the laws of the real and imaginary parts
$$\bar z^\theta_{\varepsilon}(t)={\varepsilon}\int_0^{\frac{2t}{\varepsilon^2}}\cos(\theta N_s)ds$$
and
$$\bar y^\theta_{\varepsilon}(t)={\varepsilon}\int_0^{\frac{2t}{\varepsilon^2}}\sin(\theta N_s)ds,$$
converge weakly towards the law of two independent Brownian motions.
The approximating processes are functionally dependent because we
use a single Poisson process but,
 in the limit, we obtain two independent processes.
Later, in \cite{BR1}  it is shown that for different angles $\theta_i$ the corresponding processes
converge in law towards independent Brownian motions despite
using only one Poisson process. Finally, in  \cite{BR2}, we prove that we can use a L\'evy process instead
of a  Poisson process in the definition of the sequence of approximations.

In this paper we present an extension of the Kac-Stroock result in
the directions (ii) and (iii). Our aim is to define a modification
of the processes $\bar x_{\varepsilon}$ used by Stroock,  similar to
(\ref{dedede}) proposed in \cite{B}. These complex  processes, that
we will denote by $x_\varepsilon^\theta=z_\varepsilon^\theta+ i
y_\varepsilon^\theta$, will depend on a parameter $\theta \in
(0,\pi) \cup (\pi,2\pi)$ and will be defined from an unique standard
Poisson process  and a sequence of independent random variables with
common distribution Bernoulli($\frac12$). We will check that  if we
consider $\theta_1,\theta_2,\dots,\theta_m$ such that for all $i\neq
j$, $1\leq i,j\leq m$, $\theta_i,\theta_j\in(0,\pi)\cup(\pi,2\pi)$,
$\theta_i+\theta_j\neq 2\pi$ and $\theta_i\neq\theta_j$,  the law of
the processes
$$(z^{\theta_1}_\varepsilon,\dots,z^{\theta_m}_\varepsilon,y^{\theta_1}_\varepsilon,\dots,y^{\theta_m}_\varepsilon)$$
converges weakly in the space of the continuous functions towards the
joint law of $2m$ independent Brownian motions. Moreover, we also prove  that there exist realizations of
$x_\varepsilon^\theta$ that converge almost surely to a complex Brownian
motion and we are able to obtain the  rate of convergence that does not depend on $\theta$.
As a consequence, simulating a sequence of independent random variables with common distribution exponential(1)  and a sequence of independent random variables with common distribution Bernoulli($\frac12$), we can get sequences of almost sure approximations of $d$ independent Brownian motions for any $d$.

For simplicity's sake, we only consider
$\theta\in(0,\pi)\cup(\pi,2\pi)$ for which it does not exist any
$m\in\N$ such that $\cos(m\theta)=0$ or $\sin(m\theta)=0$.

As usual, the weak convergence is proved  using  tightness and the identification of the law of all possible weak limits
(see, for instance \cite{B}, \cite{BR1}). The almost sure converge is inspired in \cite{art G-H-RM}  while the computation of the rate of convergence follows the  method given in
\cite{art G-G} and \cite{art G-G2}.

The paper is organized in the following way. Section 2 is devoted to define the processes  and to give the main results. In Section 3
we prove the weak convergence theorems. In Section
4  we prove the strong convergence theorem. The proof of the rate of convergence is  given in Section 5. Finally, there is an Appendix with some technical results.

Throughout  the paper $K, C$ will denote any positive constant, not
depending on $\varepsilon$, which may change from one expression to
another.

\section{Main results}

Let $\{M_t,t\geq0\}$ be a Poisson process of parameter 2. We define $\{N_t,t\geq0\}$ and $\{N'_t,t\geq0\}$ two other counter processes
  that, at each jump of $M$, each of them jumps or  does not jump with probability $\frac12$, independently of the jumps of the other process and of its past.

In Proposition \ref{2poisson} (see Appendix) we prove that $N$ and $N'$ are Poisson processes of parameter 1 with independent increments on disjoint   intervals.

Consider now, for $\theta\in(0,\pi)\cap(\pi,2\pi)$, the following
processes:
    \begin{equation}\label{process}
      \bigg\{ z_\varepsilon^\theta(t) = (-1)^G \,\varepsilon \,\int_0^{\frac{2t}{\varepsilon^2}} (-1)^{N'_r} \,e^{i\theta N_r}\,dr, \quad t\in[0,T]
      \bigg\},
    \end{equation}
  where $N$ and $N'$ are the processes defined above and $G$ is a random variable, independent of $N$ and $N'$, with Bernoulli distribution of parameter $\frac12$.
We can write the process $z_\varepsilon^\theta(t)$ as $z_\varepsilon^\theta(t)=x_\varepsilon^\theta(t)+iy_\varepsilon^\theta(t)$, where
    $$ x_\varepsilon^\theta(t) := \varepsilon\int_0^{\frac{2t}{\varepsilon^2}}(-1)^{N'_s+G} \cos(\theta N_s) \,ds , \quad y_\varepsilon^\theta(t) := \varepsilon\int_0^{\frac{2t}{\varepsilon^2}}(-1)^{N'_s+G} \sin(\theta N_s) \,ds $$
are the real part and the imaginary part, respectively.
In Figure \ref{fig1} we can see a simulation of the trajectories of
these processes for different values of $\theta$.
\begin{figure}\label{fig1}
  \centering
    \includegraphics[width=.5\textwidth]{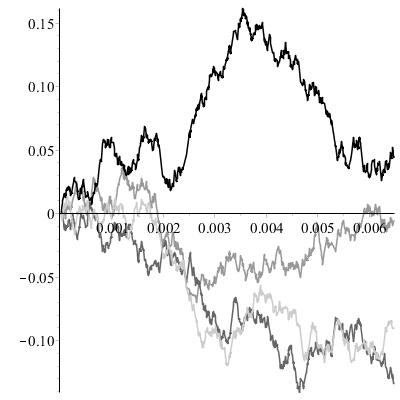}
  \caption{Simulation of the trajectories of the processes $x_{\varepsilon}^{\theta}$ and $y_{\varepsilon}^{\theta}$ for the values of the parameters
$\varepsilon=\frac{1}{200}$, $\theta=2$ and $\theta=7$.}
  \label{fig1}
\end{figure}

\bigskip

Our first result gives us the weak convergence to a complex Brownian motion.
\begin{teo}\label{teofeble}
Let $P_\varepsilon^\theta$ be the image law of
$z_\varepsilon^\theta$ in the Banach space $\mathcal{C}([0,T],\C)$
of continuous function on $[0,T]$. Then $P_\varepsilon^\theta$
converges weakly when $\varepsilon$ tends to zero to the law
$P^\theta$
  on $\mathcal{C}([0,T],\C)$ of a complex Brownian motion.
\end{teo}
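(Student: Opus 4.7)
The plan follows the two-step scheme indicated in the introduction: first establish tightness of $\{P_\varepsilon^\theta\}_{\varepsilon>0}$ on $\mathcal{C}([0,T],\C)$, and then identify the finite-dimensional distributions of every weak limit as those of a complex Brownian motion. A useful preparatory observation, stemming from the marking structure of $M$, is that for all $\alpha\in\R$ and $s\le u$,
$$E\bigl[(-1)^{N'_u-N'_s}e^{i\alpha(N_u-N_s)}\bigr] = e^{-2(u-s)},$$
together with the standard $E[e^{i\alpha(N_u-N_s)}] = e^{(u-s)(e^{i\alpha}-1)}$. These identities, combined with the independence of $(N,N')$ on disjoint intervals (Proposition \ref{2poisson}), reduce every joint expectation appearing in the calculations to an explicit product of exponentials.

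For tightness I would verify the Billingsley--Kolmogorov moment criterion applied to the increment
$$z_\varepsilon^\theta(t)-z_\varepsilon^\theta(s) = (-1)^G\,\varepsilon\int_{2s/\varepsilon^2}^{2t/\varepsilon^2}(-1)^{N'_r}e^{i\theta N_r}\,dr.$$
Expanding $|z_\varepsilon^\theta(t)-z_\varepsilon^\theta(s)|^4$ yields a quadruple integral of $E[(-1)^{\sum_j N'_{r_j}}e^{i\theta(N_{r_1}-N_{r_2}+N_{r_3}-N_{r_4})}]$ over $[2s/\varepsilon^2,2t/\varepsilon^2]^4$. Ordering the four points and applying the identities above, the expectation factorises into explicit exponentials of the form $e^{-2\Delta_k}$ or $e^{\Delta_k(\cos(c_k\theta)-1)}$, the latter providing genuine decay thanks to $\theta\ne 0,\pi$. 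Changing variables $r_j=2u_j/\varepsilon^2$ leads to $E[|z_\varepsilon^\theta(t)-z_\varepsilon^\theta(s)|^4]\le K(t-s)^2$, which settles tightness.

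For the identification step, by L\'evy's continuity theorem it suffices to show that for every partition $0=t_0<t_1<\cdots<t_n\le T$ and every $\xi_j,\eta_j\in\R$,
$$E\Bigl[\exp\Bigl(i\sum_{j=1}^n\bigl[\xi_j\Delta x_\varepsilon^\theta(t_j)+\eta_j\Delta y_\varepsilon^\theta(t_j)\bigr]\Bigr)\Bigr]\longrightarrow\prod_{j=1}^n\exp\bigl(-(\xi_j^2+\eta_j^2)(t_j-t_{j-1})/2\bigr).$$
The symmetrisation by $(-1)^G$ is the decisive device here: conditioning on $G$ replaces the outer exponential by a cosine, which kills imaginary cross terms and reduces the problem to a Taylor-expandable real quantity. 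Using the independence of $(N,N')$ on disjoint intervals, the expression factorises over the $n$ blocks, so the problem reduces, for a single increment of length $h$, to identifying $E[A_\varepsilon^2]\to h$, $E[B_\varepsilon^2]\to h$ and $E[A_\varepsilon B_\varepsilon]\to 0$ (with uniform control on higher moments provided by the tightness computation), where $A_\varepsilon,B_\varepsilon$ denote the real and imaginary parts of that increment.

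The main obstacle is this variance/cross-covariance computation. The coupling between $N$ and $N'$ on overlapping intervals combined with the product-to-sum identity $\cos(\theta N_s)\cos(\theta N_u) = \tfrac12[\cos(\theta(N_u-N_s))+\cos(\theta(N_u+N_s))]$ expands the integrand into a resonant term (which yields the correct contribution $h$ via the key identity $E[(-1)^{N'_u-N'_s}\cos(\theta(N_u-N_s))] = e^{-2(u-s)}$) and an oscillatory term of the form $e^{-2(u-s)}e^{s(\cos(2\theta)-1)+is\sin(2\theta)}$. Showing that the latter disappears in the limit $\varepsilon\to 0$ uses crucially the strict inequality $\cos(2\theta)<1$ guaranteed by $\theta\ne 0,\pi$; an analogous analysis applied to $\cos(\theta N_s)\sin(\theta N_u)$ forces the cross-covariance to zero, while the non-vanishing of $\cos(m\theta),\sin(m\theta)$ for all $m\in\N$ ensures that every higher harmonic arising in the Taylor expansion of the cosine behaves correctly. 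Once this oscillatory calculation is carried out, the Gaussian limit with independent standard real and imaginary parts is identified and the proof is complete.
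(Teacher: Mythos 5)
Your tightness step coincides with the paper's: the fourth-moment bound $K(t-s)^2$ obtained from the key identity $\E\bigl[(-1)^{N'_u-N'_s}e^{i\theta(N_u-N_s)}\bigr]=e^{-2(u-s)}$ (the paper's Lemma \ref{lematecnic}) together with the oscillatory decay coming from $1-\cos(2\theta)>0$ is exactly the argument in Section \ref{feble}. The gap is in the identification step. First, your claim that after conditioning on $G$ "the expression factorises over the $n$ blocks" is false at fixed $\varepsilon$: the increment over $[t_{j-1},t_j]$ equals $(-1)^{G+N'_{a_j}}e^{i\theta N_{a_j}}\cdot\varepsilon\int_{a_j}^{b_j}(-1)^{N'_r-N'_{a_j}}e^{i\theta(N_r-N_{a_j})}\,dr$ with $a_j=2t_{j-1}/\varepsilon^2$, and the accumulated phase/sign factor $(-1)^{G+N'_{a_j}}e^{i\theta N_{a_j}}$ couples all blocks; the increments of $z_\varepsilon^\theta$ are functionally dependent (this is stressed already in the introduction), and independence of $(N,N')$-increments on disjoint intervals only decouples the integrals, not these boundary factors. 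Any characteristic-function route must therefore prove an asymptotic decoupling statement, which you do not supply. Second, even granting decoupling, showing $\E[A_\varepsilon^2]\to h$, $\E[B_\varepsilon^2]\to h$, $\E[A_\varepsilon B_\varepsilon]\to 0$ "with uniform control on higher moments provided by the tightness computation" does not identify the limit as Gaussian: a uniform fourth-moment bound plus convergence of second moments is compatible with non-Gaussian limits, so L\'evy's continuity theorem cannot be invoked without either computing the limit of all moments/cumulants of an increment or running a genuine CLT mechanism; your appeal to a "Taylor-expandable real quantity" leaves precisely this, the heart of the matter, unproved.

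The paper circumvents both difficulties by not working with finite-dimensional characteristic functions at all: after tightness it takes any weak limit $P^\theta$ and proves, via Paul L\'evy's characterization, that the canonical real and imaginary parts are $P^\theta$-martingales with $\langle Re[Z^\theta]\rangle_t=\langle Im[Z^\theta]\rangle_t=t$ and vanishing covariation. The dependence on the past is handled by testing against bounded continuous functionals $\varphi\bigl(z_\varepsilon^\theta(s_1),\dots,z_\varepsilon^\theta(s_n)\bigr)$ and factoring out the boundary term $(-1)^{G+N'_{2s/\varepsilon^2}}e^{i\theta N_{2s/\varepsilon^2}}$, after which Lemma \ref{lematecnic} and Lemma \ref{lemma2} give the decay (the $e^{-2(\cdot)}$ term) and the vanishing of the oscillatory contributions (the $1-\cos(2\theta)$ term). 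If you want to keep your scheme, you would need to add an argument of this conditional type (or a martingale central limit theorem) to establish both the asymptotic independence of increments and the Gaussianity of the limit; as written, these two points are missing.
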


\begin{proof} See Section \ref{feble}. \end{proof}

We can also get the following extension of Theorem
\ref{teofeble}, that is the equivalent of the result obtained in
\cite{BR1} for our processes.
\begin{teo}\label{general} Consider $\theta_1,\theta_2,\dots,\theta_m$
such that for all $i\neq j$, $1\leq i,j\leq m$,
$\theta_i,\theta_j\in(0,\pi)\cup(\pi,2\pi)$, $\theta_i+\theta_j\neq
2\pi$ and $\theta_i\neq\theta_j$. Then the laws of the processes
$$(x^{\theta_1}_{\varepsilon},\dots,x^{\theta_m}_{\varepsilon},y^{\theta_1}_{\varepsilon},\dots,y^{\theta_m}_{\varepsilon})$$
converge weakly, in the space of the continuous functions, towards
the joint law of $2m$ independent Brownian motions.
\end{teo}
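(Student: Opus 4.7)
The plan is to follow the standard two-step approach already used for Theorem \ref{teofeble}: first prove tightness of the joint laws in $\mathcal{C}([0,T], \R^{2m})$, then identify every weak limit as the law of $2m$ independent Brownian motions. Tightness is immediate: tightness in the product space is equivalent to tightness of each coordinate process in $\mathcal{C}([0,T], \R)$, and each $x^{\theta_i}_{\varepsilon}$ and $y^{\theta_i}_{\varepsilon}$ is the real or imaginary part of the complex process $z^{\theta_i}_{\varepsilon}$, whose law is already known to be tight by Theorem \ref{teofeble}.

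For the identification of the limit I would apply the Cram\'er--Wold device: it suffices to show that for every finite family of times $0 \le t_1 < \cdots < t_n \le T$ and real coefficients $(\alpha_{i,k})$, $(\beta_{i,k})$, the scalar random variable
$$S_\varepsilon := \sum_{k=1}^n \sum_{i=1}^m \bigl( \alpha_{i,k}\, x^{\theta_i}_\varepsilon(t_k) + \beta_{i,k}\, y^{\theta_i}_\varepsilon(t_k) \bigr)$$
converges in law to the centered Gaussian with the variance corresponding to the same linear combination of $2m$ independent Brownian motions, namely
$$\sigma^2 = \sum_{i=1}^m \sum_{k,\ell=1}^n (\alpha_{i,k}\alpha_{i,\ell} + \beta_{i,k}\beta_{i,\ell})\, (t_k \wedge t_\ell).$$
The useful observation is that $S_\varepsilon$ can be rewritten as a single Kac--Stroock-type functional
$$S_\varepsilon = \varepsilon \int_0^{2T/\varepsilon^2} (-1)^{N'_r + G}\, h_\varepsilon(r, N_r)\, dr,$$
where $h_\varepsilon(r, n) = \sum_{i,k} \mathbf{1}_{[0,\, 2 t_k/\varepsilon^2]}(r)\, (\alpha_{i,k} \cos(\theta_i n) + \beta_{i,k} \sin(\theta_i n))$ is bounded. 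Thus $S_\varepsilon$ is an object of exactly the same form as the functionals treated in the proof of Theorem \ref{teofeble} in Section \ref{feble}, and that method (characteristic-function and martingale arguments) delivers convergence in law to a centered Gaussian whose variance is the limit of $E[S_\varepsilon^2]$.

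The main calculation is therefore the limit of the second moment. Using the independence of $N$ and $N'$ (Proposition \ref{2poisson}) together with the elementary identity $E[(-1)^{N'_u + N'_v + 2G}] = e^{-2|u - v|}$, one gets
$$E[S_\varepsilon^2] = \varepsilon^2 \iint_{[0,\, 2T/\varepsilon^2]^2} e^{-2|u - v|}\, E[h_\varepsilon(u, N_u)\, h_\varepsilon(v, N_v)]\, du\, dv.$$
Expanding the product via product-to-sum formulas produces terms involving $\cos((\theta_i \pm \theta_j) N_\cdot)$ and $\sin((\theta_i \pm \theta_j) N_\cdot)$. The hypotheses $\theta_i \neq \theta_j$, $\theta_i + \theta_j \neq 2\pi$ and $\theta_i, \theta_j \in (0,\pi) \cup (\pi, 2\pi)$ guarantee $\theta_i \pm \theta_j \notin 2\pi \Z$, so the corresponding Poisson expectations decay exponentially via $E[e^{i\omega N_r}] = e^{r(e^{i\omega}-1)}$. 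Combined with the $e^{-2|u - v|}$ weight, the mixed contributions produce integrals over $[0,\, 2T/\varepsilon^2]^2$ that remain bounded as $\varepsilon \to 0$ and are therefore killed by the $\varepsilon^2$ prefactor, while the diagonal terms in $\alpha_{i,k}\alpha_{i,\ell}$ and $\beta_{i,k}\beta_{i,\ell}$ reproduce $t_k \wedge t_\ell$ by the same one-angle computation underlying Theorem \ref{teofeble}. Summing gives exactly $\sigma^2$.

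The hard part is the combinatorial bookkeeping of the various product-to-sum terms, and in particular the verification that the same-angle real-imaginary covariance $E[x^{\theta_i}_\varepsilon(s) y^{\theta_i}_\varepsilon(t)]$ vanishes in the limit: this cancellation is not produced purely by exponential decay (several summands have $O(1)$-integrands) but also by a symmetric cancellation between the regions $u < v$ and $u > v$, so one must control the subleading boundary contributions uniformly in $\varepsilon$ to make sure they stay $o(\varepsilon^{-2})$ before being multiplied by the $\varepsilon^2$ prefactor. With this in hand, Cram\'er--Wold combined with the tightness above yields the theorem.
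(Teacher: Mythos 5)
There are two genuine problems with your identification step. First, the factorization of $\E[S_\varepsilon^2]$ through "the independence of $N$ and $N'$" is not available: Proposition \ref{2poisson} only gives that $N$ and $N'$ are Poisson processes with independent increments on \emph{disjoint} intervals, and in fact the two processes are dependent, being thinned from the same process $M$. This is precisely why the paper needs Lemma \ref{lematecnic}: the joint expectation $\E\big[(-1)^{N'_{x_2}-N'_{x_1}}e^{i\theta(N_{x_2}-N_{x_1})}\big]$ equals $e^{-2(x_2-x_1)}$, which is \emph{not} the product of the marginal expectations (that product would carry an extra factor $e^{(x_2-x_1)(e^{i\theta}-1)}$). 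The cancellation of the $\theta$-dependence in the joint expectation is what makes the diagonal (same-angle) terms integrate to $t_k\wedge t_\ell$ (see Lemma \ref{lemma2}a); with your factorized kernel $e^{-2|u-v|}\,\E[h_\varepsilon(u,N_u)h_\varepsilon(v,N_v)]$ the diagonal contribution would come out strictly smaller than $t_k\wedge t_\ell$, so the computation as written does not even reproduce the variance $\sigma^2$ you are aiming for. The correct route is to split at the smaller time, apply Lemma \ref{lematecnic} to the joint increment of $(N,N')$ over $[u,v]$, and only then use independence of increments on disjoint intervals for the remaining factor.

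Second, and more structurally, the Cram\'er--Wold step has a gap at its crucial point: convergence of $\E[S_\varepsilon^2]$ does not identify the limit law, and the proof of Theorem \ref{teofeble} does not provide the central limit theorem you invoke. That proof never shows directly that a Kac--Stroock functional converges in law to a Gaussian; it proves tightness and then characterizes every subsequential limit of the \emph{process} via Paul L\'evy's theorem (martingale property, quadratic variations, vanishing covariation). If you apply that martingale-characterization machinery to the vector $(x^{\theta_1}_\varepsilon,\dots,y^{\theta_m}_\varepsilon)$ — which is what would actually close the argument — you are doing exactly what the paper does, and the Cram\'er--Wold moment computation becomes superfluous: the paper reduces Theorem \ref{general} to checking that the cross-variations $\langle Re[Z^{\theta_i}],Re[Z^{\theta_j}]\rangle_t$, $\langle Im[Z^{\theta_i}],Im[Z^{\theta_j}]\rangle_t$ and $\langle Im[Z^{\theta_i}],Re[Z^{\theta_j}]\rangle_t$ vanish for $i\neq j$, which is where the hypotheses $\theta_i\neq\theta_j$, $\theta_i+\theta_j\neq 2\pi$ enter (following Theorem 2 of \cite{BR1} together with Lemma \ref{lematecnic}). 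Your tightness step and your observation about where the angle conditions are used are correct, but as written the proposal neither proves Gaussianity of $S_\varepsilon$ nor computes its variance correctly.
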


\begin{proof} See the end of Section \ref{feble}. \end{proof}

\bigskip

Our next result gives the strong convergence of realizations of our processes $\{z_\varepsilon^\theta(t);\,t\in[0,1]\}$
and states as follows:

\begin{teo}\label{resultat}
There exists realizations of the process $z_{\varepsilon}^{\theta}$
on the same probability space as a complex Brownian motion
$\{z(t),t\geq0\}$ such that
  $$ \lim_{\varepsilon\rightarrow 0} \max_{0\leq t\leq1} |z_\varepsilon^\theta(t)-z(t)|=0 \quad a.s. $$
\end{teo}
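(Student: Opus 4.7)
The plan is to adapt the Skorokhod-embedding construction of \cite{art G-H-RM} from the one-dimensional to the complex-valued setting. I would start from a probability space carrying a complex Brownian motion $z(t)=B^1(t)+iB^2(t)$ enlarged by an independent family $\{G,\xi_n,\eta_n\}_{n\geq1}$ of Bernoulli$(\tfrac12)$ variables. From the Bernoullis I build the unit-modulus ``direction'' chain $V_0=(-1)^G$ and $V_n=V_{n-1}(-1)^{\xi_n}e^{i\theta\eta_n}$, which reproduces the joint law of $(-1)^{G+N'_r}e^{i\theta N_r}$ sampled at the successive jump instants of the underlying rate-$2$ Poisson process $M$ described in Section~2.

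Next, I would inductively define stopping times $T_n^\varepsilon$ of the enlarged filtration using rotation invariance. Conditionally on $V_{n-1}\in\mathcal{F}_{T_{n-1}^\varepsilon}$, the rotated increment $\overline{V}_{n-1}\bigl(z(T_{n-1}^\varepsilon+s)-z(T_{n-1}^\varepsilon)\bigr)$ is again a planar Brownian motion in $s$, and a one-dimensional Skorokhod embedding applied to its real part produces a stopping time at which the projection onto $V_{n-1}$ equals the required transport displacement $\varepsilon\tau_n$, where $\tau_n\sim\mathrm{Exp}(2)$ reproduces the correct $M$-holding time. I would then declare $z_\varepsilon^\theta$ to move at constant complex velocity $(2/\varepsilon)V_{n-1}$ on each interval $[T_{n-1}^\varepsilon,T_n^\varepsilon]$; a distributional bookkeeping step shows that the resulting piecewise-linear process agrees in law with (\ref{process}).

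The sup-norm error $\sup_{0\leq t\leq1}|z_\varepsilon^\theta(t)-z(t)|$ splits into three contributions: (a)~the residual at the embedded times, which is the component of $z(T_n^\varepsilon)-z(T_{n-1}^\varepsilon)$ perpendicular to $V_{n-1}$ and must be controlled by its martingale structure; (b)~the Brownian fluctuations of $z$ on each short interval $[T_{n-1}^\varepsilon,T_n^\varepsilon]$, handled by the uniform modulus of continuity of $z$ once $\max_n(T_n^\varepsilon-T_{n-1}^\varepsilon)\to 0$ almost surely; and (c)~the drift between the embedded clock $\sum_k (T_k^\varepsilon-T_{k-1}^\varepsilon)$ and the natural $t$-scale, estimated by Doob's maximal inequality for the i.i.d.\ holding times. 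A Borel-Cantelli argument along a suitably sparse sequence $\varepsilon_k\to 0$ then upgrades the resulting probability bounds to almost sure convergence.

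The main obstacle is~(a). In the one-dimensional situation of~\cite{art G-H-RM} the sign of the Brownian exit from $[-\varepsilon,\varepsilon]$ matches the $\pm1$ velocity of the transport process exactly, whereas in the plane no stopping time can simultaneously prescribe magnitude and direction of the Brownian increment. A perpendicular leftover is therefore produced at every step, and showing that these orthogonal residuals, weighted by the rotating directions $V_{n-1}$ and with independent signs inherited from the Bernoullis $\{\xi_n,\eta_n\}$, assemble into a martingale whose maximum on $[0,1]$ vanishes uniformly as $\varepsilon\to 0$ is the delicate core of the argument.
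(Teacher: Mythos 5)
You have a genuine gap, and it sits exactly where you placed it: point~(a) is not a ``delicate core'' that remains to be worked out, it is a step that fails. In the plane no stopping time can realize the prescribed complex displacement exactly (planar Brownian motion almost surely never hits a given point), so your scheme necessarily leaves, at every step, the component of $z(T_n^\varepsilon)-z(T_{n-1}^\varepsilon)$ orthogonal to $V_{n-1}$. Quantitatively, the embedded along-direction displacement has second moment of order $\varepsilon^2$, hence $\E[T_n^\varepsilon-T_{n-1}^\varepsilon]$ is of order $\varepsilon^2$, and the orthogonal leftover at step $n$ is conditionally centered with conditional variance equal to $T_n^\varepsilon-T_{n-1}^\varepsilon$. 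To cover $[0,1]$ you need on the order of $\varepsilon^{-2}$ steps, so the accumulated orthogonal residual is a martingale whose quadratic variation at time $t$ is of order $t$, not of order $\varepsilon$: by the martingale central limit theorem it converges in law to a nondegenerate Brownian motion, and its maximum over $[0,1]$ does not tend to zero. The rotation of the directions $V_{n-1}$ and the independent Bernoulli signs give you centering, not smallness; no choice of sparse sequence $\varepsilon_k$ or Borel--Cantelli argument can rescue a quantity that is of order one in law. Your items (b) and (c) are fine in spirit (they correspond to the paper's control of the clocks $\Gamma_m^{\varepsilon,\theta}$ and $\Lambda_m^{\varepsilon,\theta}$), but they rest on (a).

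The paper avoids the planar embedding altogether. It observes that the real part $x_\varepsilon^\theta$ alone is a one-dimensional transport process with random speeds $\pm\frac{2}{\varepsilon}|\cos(b_{m-1}\theta)|$, where $b_m$ is the Bernoulli random walk recording the jumps of $N$. It applies the classical one-dimensional Skorokhod embedding to the conditionally symmetric increments $k_m|\cos(b_{m-1}\theta)|\epsilon_m^\varepsilon$, obtaining stopping times $\sigma_m^{\varepsilon,\theta}$ of a single real Brownian motion $x$, defines $x_\varepsilon^\theta$ piecewise linearly through the identity $x_\varepsilon^\theta(\sum_{j\le m}\gamma_j^{\varepsilon,\theta})=x(\sum_{j\le m}\sigma_j^{\varepsilon,\theta})$, and reduces the theorem to the almost sure convergence of $\max_m|\sum_{j\le m}\gamma_j^{\varepsilon,\theta}-m\varepsilon^2/4|$ and $\max_m|\sum_{j\le m}\sigma_j^{\varepsilon,\theta}-m\varepsilon^2/4|$ to zero. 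This is where the specific trigonometric structure enters, through $\cos^2(b_{j-1}\theta)=\frac12(1+\cos(2b_{j-1}\theta))$ and the boundedness of the partial sums $\sum_k\cos(2k\theta)$ over the geometric blocks on which $b$ is constant --- a cancellation your proposal never confronts. The imaginary part is treated identically with $\sin$ and an independent Brownian motion $y$, so no two-dimensional embedding, and hence no orthogonal residual, ever appears. To repair your argument you would have to abandon the planar embedding and follow this componentwise route.
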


\begin{proof} See Section \ref{cap_realp}. \end{proof}

\bigskip

Notice that combining the results of Theorem \ref{general}  and Theorem \ref{resultat} we get that  from our two Poisson processes $N$ and $N'$ and a random variable $G$ with Bernoulli law, we are able to construct approximations to $d$ standard independent Brownian motions  for $d$ as large as we want.

In our last result we give the rate of convergence of these processes.
 \begin{teo}\label{thm_rate}
      For all $q>0$,
        $$ P\left( \max_{0\leq t\leq1} |z_\varepsilon^\theta(t)-z(t)| > \alpha^*\,\varepsilon^{\frac12}\left(\log{\frac1\varepsilon}\right)^\frac52 \right) = o(\varepsilon^q),  \qquad \mbox{as} \quad \varepsilon\rightarrow0 $$
      where $\alpha^*$ is a positive constant depending on $q$.
    \end{teo}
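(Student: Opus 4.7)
My plan is to follow the strategy of Gorostiza and Griego \cite{art G-G} and \cite{art G-G2}, adapted from the real transport process to our complex setting. The coupling underlying Theorem \ref{resultat} typically realizes the partial sums approximating $z_\varepsilon^\theta$ by evaluating the Brownian motion $z$ at a sequence of Skorokhod stopping times $\tau_k$, and the present task is to quantify the discrepancy on a deterministic grid.

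First, I discretize $[0,1]$ by $t_k = k\Delta_\varepsilon$ with mesh $\Delta_\varepsilon \asymp \varepsilon^{2-\delta}$ for a small $\delta > 0$, giving $\varepsilon^{-(2-\delta)}$ grid points. At each $k$ I aim to establish an exponential concentration of the form
$$P\bigl(|z_\varepsilon^\theta(t_k) - z(t_k)| > \lambda\bigr) \leq C \exp\bigl(-c \lambda^2 \varepsilon^{-1}(\log(1/\varepsilon))^{-4}\bigr).$$
With $\lambda = \alpha^* \varepsilon^{1/2}(\log(1/\varepsilon))^{5/2}$ and $\alpha^*$ chosen large enough in terms of $q$, a union bound over the $\varepsilon^{-(2-\delta)}$ grid points produces a total probability of order $\varepsilon^q$.

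The oscillations between consecutive grid points are handled by the triangle inequality
$$|z_\varepsilon^\theta(t) - z(t)| \leq |z_\varepsilon^\theta(t_k) - z(t_k)| + |z_\varepsilon^\theta(t) - z_\varepsilon^\theta(t_k)| + |z(t) - z(t_k)|$$
for $t \in [t_k, t_{k+1}]$: the Brownian increment is controlled by the L\'evy modulus of continuity, with exponential tail at scale $C\sqrt{\Delta_\varepsilon \log(1/\Delta_\varepsilon)}$, which is much smaller than the target $\varepsilon^{1/2}(\log(1/\varepsilon))^{5/2}$; the transport increment is bounded crudely by $\varepsilon$ times the count $M_{2t_{k+1}/\varepsilon^2} - M_{2t_k/\varepsilon^2}$ of Poisson jumps of $M$, which by Bernstein's inequality concentrates around $4\varepsilon^{-\delta}$ and so contributes at most $O(\varepsilon^{1-\delta})$ with probability at least $1-O(\varepsilon^q)$.

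The main obstacle is the grid-point concentration. Conditional on the path of $M$, the integrand $(-1)^{N'_s + G} e^{i\theta N_s}$ factors on each inter-arrival interval of $M$ into a Rademacher sign (coming from the thinning that defines $N'$) times a deterministic unit complex number (determined by the number of past jumps of $N$). My plan is to (i) express $z_\varepsilon^\theta(t_k)$ as a conditional martingale sum indexed by the jumps of $M$, with bounded increments of order $\varepsilon$ times the inter-arrival lengths; (ii) apply Azuma-Hoeffding conditional on the jump times to obtain a subgaussian deviation bound with variance proxy controlled by the aggregate squared inter-arrival lengths; (iii) transfer this estimate to $z(t_k)$ via the Skorokhod time shift $|\tau_k - t_k|$, for which the same Poisson large deviations supply exponential control. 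The exponent $5/2$ of the logarithmic factor then emerges from optimizing the choice of $\delta$ and combining the logarithmic losses from the union bound, the Brownian modulus, and the variance proxy of the conditional martingale.
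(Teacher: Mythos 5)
Your scaffolding (deterministic grid, union bound, Brownian modulus between grid points, crude bound on the transport increment) is unobjectionable --- in fact the transport increment is bounded deterministically by $2\Delta_\varepsilon/\varepsilon$ because the integrand has modulus one, so no Bernstein argument is needed there. The genuine gap is the grid-point estimate, which is where all the work lies. Azuma--Hoeffding applied to $z_\varepsilon^\theta(t_k)$ written as a conditional martingale sum over the jumps of $M$ only gives concentration of $z_\varepsilon^\theta(t_k)$ around its (zero) conditional mean with variance proxy of order $t_k$: there are roughly $4t_k/\varepsilon^2$ increments of size of order $\varepsilon$ each. That says nothing about $|z_\varepsilon^\theta(t_k)-z(t_k)|$ being of order $\varepsilon^{1/2}(\log(1/\varepsilon))^{5/2}$; such closeness comes only from the coupling, namely the identity $x_\varepsilon^\theta(\Gamma_m^{\varepsilon,\theta})=x(\Lambda_m^{\varepsilon,\theta})$ together with a quantitative bound on the time discrepancies $|\Lambda_m^{\varepsilon,\theta}-m\varepsilon^2/4|$ and $|\Gamma_m^{\varepsilon,\theta}-m\varepsilon^2/4|$, followed by the Brownian modulus of continuity over intervals of that length. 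Your claimed subgaussian bound at the grid points is exactly the hard statement to be proved, and the argument you sketch for it targets the wrong quantity.

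Your step (iii) delegates the time discrepancy to ``the same Poisson large deviations'', but it is not a Poisson quantity. $\Lambda_m^{\varepsilon,\theta}=\sum_j\sigma_j^{\varepsilon,\theta}$ is a sum of unbounded Skorokhod embedding times, whose tails are reached only through Skorokhod's moment inequality $\E[(\sigma_j^{\varepsilon,\theta})^{2p}|\mathscr{B}]\leq C\,(2p)!\,\E[(\xi_j^{\varepsilon,\theta})^{4p}|\mathscr{B}]$ (so Azuma does not apply, the increments being unbounded), and its conditional mean $\sum_j\frac{\varepsilon^2}{2}\cos^2(b_{j-1}\theta)$ itself fluctuates around $m\varepsilon^2/4$ through the trigonometric sum $\sum_j\cos(2b_{j-1}\theta)$ over the Bernoulli walk $b_j$; this $\theta$-dependent term is precisely the new difficulty relative to \cite{art G-G}, and it is handled in the paper by rewriting it with the geometric sojourn times $T_k$, the uniformly bounded partial sums $\sum_{k}\cos(2k\theta)$, and the residual $Z_{n+1}$. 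To get $o(\varepsilon^q)$ for every $q$ while keeping the discrepancy at scale $\delta_\varepsilon\approx\varepsilon(\log(1/\varepsilon))^{4}$ --- which is what produces the exponent $\frac52$ via $\sqrt{\delta_\varepsilon\log(1/\varepsilon)}$ --- the paper applies Doob's inequality with moments of order $2p$, $p=[\log(1/\varepsilon)]$, and controls the resulting multinomial sums with Lemma \ref{lemF} and Stirling's formula. None of this is supplied or replaced by your Azuma/Poisson outline, so the logarithmic exponent $\frac52$ is asserted rather than derived; as it stands the proposal would not close.
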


\begin{proof} See Section \ref{rates}. \end{proof}

\section{Proof of  weak convergence}\label{feble}

In order to prove Theorem \ref{teofeble} we have to check that the family
$P_\varepsilon^\theta$
  is tight and that the law of all possible limits of $P_\varepsilon^\theta$ is the law of a complex Brownian motion. Following the same method that in \cite{B}, the proof  is based on the following  lemma:

\begin{lema}\label{lematecnic}For any $0\leq x_1\leq x_2$
    \begin{eqnarray*}
      \E\big[(-1)^{N'_{x_2}-N'_{x_1}}  e^{i\theta(N_{x_2}-N_{x_1})}\big] = e^{-2(x_2-x_1)}.
    \end{eqnarray*}
\end{lema}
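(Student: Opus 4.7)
The plan is to condition on the number of jumps of the driving Poisson process $M$ in $[x_1,x_2]$. Set $h:=x_2-x_1$ and $K:=M_{x_2}-M_{x_1}$; then $K$ is Poisson of parameter $2h$. By the thinning construction of $N$ and $N'$, conditional on $K=k$ the joint increment $(N_{x_2}-N_{x_1},\,N'_{x_2}-N'_{x_1})$ has the same law as $(\sum_{j=1}^k X_j,\sum_{j=1}^k Y_j)$, where $\{(X_j,Y_j)\}_{j=1}^k$ are i.i.d.\ and within each pair $X_j$ and $Y_j$ are independent Bernoulli($\tfrac12$) variables.

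With this conditioning the expectation factorises across jumps and reduces everything to the single-jump contribution
\[
\E\bigl[(-1)^{Y_1}e^{i\theta X_1}\bigr]=\E[(-1)^{Y_1}]\,\E[e^{i\theta X_1}],
\]
where I used the independence of $X_1$ and $Y_1$. Since $\E[(-1)^{Y_1}]=\tfrac12-\tfrac12=0$, this one-jump factor vanishes irrespective of $\theta$. Hence the conditional expectation given $K=k$ equals $0$ for every $k\geq1$ and equals $1$ for $k=0$, so
\[
\E\bigl[(-1)^{N'_{x_2}-N'_{x_1}}e^{i\theta(N_{x_2}-N_{x_1})}\bigr]=P(K=0)=e^{-2h},
\]
which is exactly the claim.

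No step is a real obstacle; the only conceptual care needed is not to treat $N$ and $N'$ as independent on the common interval $[x_1,x_2]$, since Proposition \ref{2poisson} only provides independence across \emph{disjoint} intervals. The correlation is handled by returning to the common driver $M$, whose jumps are distributed to $N$ and $N'$ by independent Bernoulli switches; it is precisely this per-jump independence that produces the vanishing factor $\E[(-1)^{Y_1}]=0$ and kills all terms with $K\geq1$, leaving only $P(K=0)=e^{-2h}$.
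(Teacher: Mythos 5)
Your proof is correct and follows essentially the same route as the paper: both condition on the number of jumps $k$ of the driving process $M$ in $[x_1,x_2]$ and use the per-jump independent Bernoulli thinning, your factorisation $\bigl(\E[(-1)^{Y_1}]\bigr)^k=0$ being exactly the paper's identity $\sum_{n=0}^k\binom{k}{n}(-1)^n=0$ for $k\neq0$, so that only the $k=0$ term survives and gives $e^{-2(x_2-x_1)}$.
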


\begin{proof}
From the definition of $N$ and $N'$ it follows that
    \begin{eqnarray*}
      && \E\big[(-1)^{N'_{x_2}-N'_{x_1}} e^{i\theta(N_{x_2}-N_{x_1})}\big]\\
      & =& \sum_{n=0}^\infty\sum_{m=0}^\infty (-1)^n e^{i\theta m} P\big(N'_{x_2}-N'_{x_1}=n,N_{x_2}-N_{x_1}=m\big) \\
      &=&\!\!\!\!\! \sum_{n=0}^\infty\sum_{m=0}^\infty (-1)^n e^{i\theta m}\!\!\!\!\! \sum_{k=n\vee m}^\infty \!\!\! P\big(N'_{x_2}-N'_{x_1}=n,N_{x_2}-N_{x_1}=m \big| M_{x_2}-M_{x_1}=k\big) \\
      && \times\, P(M_{x_2}-M_{x_1}=k) \\
      &=&\sum_{n=0}^\infty\sum_{m=0}^\infty (-1)^n e^{i\theta m} \sum_{k=n\vee m}^\infty {k\choose n}{k\choose m}\frac{1}{2^k}\frac{1}{2^k}\frac{[2(x_2-x_1)]^ke^{-2(x_2-x_1)}}{k!} \\
      &=& e^{-2(x_2-x_1)} \sum_{k=0}^\infty \left(\frac{x_2-x_1}{2}\right)^k\frac{1}{k!} \sum_{n=0}^k{k\choose n}(-1)^n \sum_{m=0}^k{k\choose m}e^{i\theta
      m}.
    \end{eqnarray*}
 Notice that $\sum_{n=0}^k{k\choose n}(-1)^n=0$ when $k\neq0$, therefore the above expression is different from zero only when $k=0$ and, as a consequence, when $n=0$ and $m=0$. Hence, as the series is absolutely convergent,
      $$ \E\big[(-1)^{N'_{x_2}-N'_{x_1}}  e^{i\theta(N_{x_2}-N_{x_1})}\big] = e^{-2(x_2-x_1)}, $$
as we wanted to prove.
    \end{proof}

Using Lemma \ref{lematecnic}, we can also get a version of  Lemma 3.2 in
    \cite{B} well adapted to our processes.
 \begin{lema}\label{lemma2}
      Consider $\{\mathcal{F}_t^{\varepsilon,\theta}\}$ the natural filtration of the processes $z_\varepsilon^\theta$. Then, for any $s<t$ and for any real $\{\mathcal{F}_s^{\varepsilon,\theta}\}$-measurable and bounded random variable $Y$, we have that,  for any $\theta\in(0,\pi)\cup(\pi,2\pi)$,
        \begin{enumerate}[a)]
          \item $\displaystyle\varepsilon^2\int_{\frac{2s}{\varepsilon^2}}^{\frac{2t}{\varepsilon^2}}\int_{\frac{2s}{\varepsilon^2}}^{x_2} \!\!\E\left[(-1)^{N'_{x_2}-N'_{x_1}}  e^{i\theta(N_{x_2}-N_{x_1})}\right]\! dx_1dx_2 \!=\! (t-s)+\frac{\varepsilon^2}{4}(e^{-\frac{4}{\varepsilon^2}(t-s)}-1)$
          \item $\displaystyle\lim_{\varepsilon\rightarrow0} \Big| \varepsilon^2\int_{\frac{2s}{\varepsilon^2}}^{\frac{2t}{\varepsilon^2}}\int_{\frac{2s}{\varepsilon^2}}^{x_2}\E\left[(-1)^{N'_{x_2}+N'_{x_1}} e^{i\theta(N_{x_2}+N_{x_1})}Y\right]\,dx_1dx_2 \Big|=0$.
        \end{enumerate}
    \end{lema}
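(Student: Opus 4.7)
The two parts of the lemma require rather different ideas, so I treat them separately. Part (a) is a direct consequence of Lemma \ref{lematecnic}: the integrand equals $e^{-2(x_2-x_1)}$, and performing the inner integral in $x_1$ (yielding $\frac{1}{2}(1-e^{-2(x_2-\frac{2s}{\varepsilon^2})})$) followed by the outer integral in $x_2$ with the substitution $u=x_2-\frac{2s}{\varepsilon^2}$ gives exactly $(t-s)+\frac{\varepsilon^2}{4}(e^{-\frac{4(t-s)}{\varepsilon^2}}-1)$.

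The crux of the lemma is part (b). The plan is to split the exponents into pieces that depend on $N,N'$ on pairwise disjoint intervals, so that the disjoint-interval independence from Proposition \ref{2poisson} factorises the expectation. For $\frac{2s}{\varepsilon^2}\le x_1\le x_2$, write
\[
N_{x_2}+N_{x_1}=(N_{x_2}-N_{x_1})+2\bigl(N_{x_1}-N_{\frac{2s}{\varepsilon^2}}\bigr)+2N_{\frac{2s}{\varepsilon^2}},
\]
and analogously for $N'$; using $(-1)^{2k}=1$, the random variable inside the expectation becomes the product
\[
\bigl(Y\,e^{2i\theta N_{\frac{2s}{\varepsilon^2}}}\bigr)\cdot e^{2i\theta(N_{x_1}-N_{\frac{2s}{\varepsilon^2}})}\cdot (-1)^{N'_{x_2}-N'_{x_1}}\,e^{i\theta(N_{x_2}-N_{x_1})}.
\]
Since $Y$ is $\mathcal{F}_s^{\varepsilon,\theta}$-measurable, the first factor depends on $N,N'$ only on $[0,\frac{2s}{\varepsilon^2}]$, the second on $N$ only on $(\frac{2s}{\varepsilon^2},x_1]$, and the third on $N,N'$ only on $(x_1,x_2]$; by Proposition \ref{2poisson} the expectation factorises into the product of the three corresponding expectations.

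These three factors are simple to control: the first is bounded in modulus by $\|Y\|_\infty$; the second is the Poisson characteristic function $\exp\bigl((x_1-\frac{2s}{\varepsilon^2})(e^{2i\theta}-1)\bigr)$, of modulus $e^{-c(x_1-\frac{2s}{\varepsilon^2})}$ with $c:=1-\cos(2\theta)>0$ (strict positivity because $\theta\in(0,\pi)\cup(\pi,2\pi)$ prevents $2\theta$ from being a multiple of $2\pi$); the third equals $e^{-2(x_2-x_1)}$ by Lemma \ref{lematecnic}. Substituting $u=x_1-\frac{2s}{\varepsilon^2}$ and $v=x_2-\frac{2s}{\varepsilon^2}$, the modulus of the double integral is then bounded by
\[
\|Y\|_\infty\,\varepsilon^2\int_0^{\frac{2(t-s)}{\varepsilon^2}}\!\!\int_0^v e^{-2(v-u)}e^{-cu}\,du\,dv = O(\varepsilon^2),
\]
which vanishes as $\varepsilon\to 0$.

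The main obstacle I anticipate is the bookkeeping that justifies the three-fold factorisation: one has to verify that after inserting $\pm N_{\frac{2s}{\varepsilon^2}}$ the three remaining factors genuinely depend on $N,N'$ only through their three pairwise disjoint intervals, and that Proposition \ref{2poisson} indeed supplies the joint independence required (not merely marginal independence of $N$ and of $N'$). Once that is in place, the positivity of $c$ drives the $O(\varepsilon^2)$ decay and the remaining calculations are routine.
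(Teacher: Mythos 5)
Your proof is correct and follows essentially the argument the paper intends (it only cites Lemma \ref{lematecnic} and the method of \cite{B}): part (a) by direct integration of $e^{-2(x_2-x_1)}$, and part (b) by splitting into increments over the disjoint intervals $[0,\tfrac{2s}{\varepsilon^2}]$, $(\tfrac{2s}{\varepsilon^2},x_1]$, $(x_1,x_2]$, factorising by independence, and using $\big|\E e^{2i\theta(N_{x_1}-N_{2s/\varepsilon^2}})\big|=e^{-(1-\cos 2\theta)(x_1-2s/\varepsilon^2)}$ together with Lemma \ref{lematecnic} to get an $O(\varepsilon^2)$ bound — the same characteristic-function estimate the paper itself uses in the tightness computation.
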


\begin{proof}
Follow the same ideas that in  \cite{B} using Lemma \ref{lematecnic}.
\end{proof}

{\it Proof of  Theorem \ref{teofeble}}.  We will give only the skeleton  of the proof.

We need to prove that the laws corresponding to
$\{x_\varepsilon^\theta(t), t\geq0\}$ and
$\{y_\varepsilon^\theta(t), t\geq0\}$ are tight, using
Billingsley criterion and that our processes are null in the origin,
it is sufficient to check that there exists a constant $K$ such that
for any $s<t$
       \begin{eqnarray*}
&& \sup_\varepsilon\left[\E\left(\varepsilon\int_{\frac{2s}{\varepsilon^2}}^{\frac{2t}{\varepsilon^2}}(-1)^{N'_x+G}\cos(\theta N_x)\,dx\right)^4  \right.  \\
&& \qquad\quad    \left.    +\E\left(\varepsilon\int_{\frac{2s}{\varepsilon^2}}^{\frac{2t}{\varepsilon^2}}(-1)^{N'_x+G}\sin(\theta N_x)\,dx\right)^4\right]\leq K(t-s)^2. \end{eqnarray*}
Following the proof of lemma 2.1 in \cite{B}  and applying  Lemma \ref{lematecnic} we obtain that
\begin{eqnarray*}
  && \E\left(\varepsilon\int_{\frac{2s}{\varepsilon^2}}^{\frac{2t}{\varepsilon^2}}(-1)^{N'_x+G}\cos(\theta N_x)\,dx\right)^4+\E\left(\varepsilon\int_{\frac{2s}{\varepsilon^2}}^{\frac{2t}{\varepsilon^2}}(-1)^{N'_x+G}\sin(\theta N_x)\,dx\right)^4 \\
   && \leq 12\left(\varepsilon^2\int_{\left[\frac{2s}{\varepsilon^2},\frac{2t}{\varepsilon^2}\right]^2}\1_{\{x_1\leq x_2\}} \Big| \,\E\big[(-1)^{N'_{x_2}-N'_{x_1}} e^{i\theta(N_{x_2}-N_{x_1})}\big]\Big|\,dx_1\,dx_2\right)^2 \\
  && \quad + 48\,\varepsilon^4\,\int_{\mathcal{I}} \big| \E\big[(-1)^{N'_{x_4}-N'_{x_3}} e^{i\theta(N_{x_4}-N_{x_3})}\big] \big|  \big| \E\big[e^{2i\theta(N_{x_3}-N_{x_2})}\big] \big| \,dx_1\dots dx_4 \\
&&  \leq
12\left(\varepsilon^2\int_{\frac{2s}{\varepsilon^2}}^{\frac{2t}{\varepsilon^2}} \int_{\frac{2s}{\varepsilon^2}}^{x_2} e^{-2(x_2-x_1)} \,dx_1\,dx_2\right)^2   \\ & &\qquad \qquad +  48\,\varepsilon^4\,\int_{\mathcal{I}} \,e^{-2(x_4-x_3)} \,e^{-(x_3-x_2)(1-\cos(2\theta))} \,dx_1\dots dx_4
\\
  &&  \leq 12(t-s)^2 + \frac{48(t-s)^2}{1-\cos(2\theta)}
\end{eqnarray*}
where
$\mathcal{I}:=\{(x_1,x_2,x_3,x_4)\in\left[\frac{2s}{\varepsilon^2},\frac{2t}{\varepsilon^2}\right]^4:\,x_1\leq
x_2\leq x_3\leq x_4\}$.
So the family of laws is tight.

 Now we need to identify all the
possible limit laws. Consider a subsequence, which we
will also denote by $\{P_{\varepsilon}^{\theta}\}$, weakly
convergent to some probability $P^{\theta}$. We want to prove that
 the canonical process $Z^{\theta}=\{z^{\theta}(t),t\geq0\}$ is a complex Brownian motion under $P^\theta$, that is the real part $X$ and the imaginary part $Y$ of this process are two independent Brownian motions.
  Using Paul L\'evy's theorem it is sufficient to prove that, under $P^\theta$, $X$ and $Y$ are both martingale with respect to the natural filtration $\{\mathcal{F}_t\}$ with quadratic variation $<Re[Z^{\theta}],Re[Z^{\theta}]>_t=t$, $<Im[Z^{\theta}],Im[Z^{\theta}]>_t=t$ and null covariation.

  To prove the matingale property with respect to the natural filtration $\{\mathcal{F}_t\}$,
following the Section 3.1 in \cite{B},
  it is enough to see that  for any $s_1\leq s_2\leq \cdots\leq s_n\leq s<t$ and  for any bounded continuous function $\varphi:\C^n\rightarrow\R$
    $$ \left| \E\left(\varphi\big(z_\varepsilon^\theta(s_1),\dots,z_\varepsilon^\theta(s_n)\big)\varepsilon\int_{\frac{2s}{\varepsilon^2}}^{\frac{2t}{\varepsilon^2}}(-1)^{G+N'_x}\,e^{i\theta N_x}\,dx\right) \right| $$
converges to zero as $\varepsilon$ tends to zero. But,
    \begin{eqnarray*}
      && \left| \E\left(\varphi\big(z_\varepsilon^\theta(s_1),\dots,z_\varepsilon^\theta(s_n)\big)\varepsilon\int_{\frac{2s}{\varepsilon^2}}^{\frac{2t}{\varepsilon^2}}(-1)^{G+N'_x}\,e^{i\theta N_x}\,dx\right) \right| \\
      &=&\left| \E\left(\varphi\big(z_\varepsilon^\theta(s_1),\dots,z_\varepsilon^\theta(s_n)\big)(-1)^{G+N'_{2s/\varepsilon^2}}\,e^{i\theta
      N_{2s/\varepsilon^2}}\right)\right.
      \\&&\times\left.\varepsilon\int_{\frac{2s}{\varepsilon^2}}^{\frac{2t}{\varepsilon^2}}\E\left((-1)^{N'_x-N'_{2s/\varepsilon^2}}\,e^{i\theta(N_x-N_{2s/\varepsilon^2})}\right)\,dx \right| \\
      &\leq&K\varepsilon\int_{\frac{2s}{\varepsilon^2}}^{\frac{2t}{\varepsilon^2}} e^{-2(x-\frac{2s}{\varepsilon^2})} \,dx
      =\frac{K\varepsilon}{2} \big(1-e^{-\frac{4}{\varepsilon^2}(t-s)}\big),
    \end{eqnarray*}
that converges to zero as $\varepsilon$ tends to zero. Therefore, martingale property is proved.

To prove that $<Re[Z^{\theta}],Re[Z^{\theta}]>_t=t$ and $<Im[Z^{\theta}],Im[Z^{\theta}]>_t=t$ we will check that for any $s_1\leq\dots\leq s_n\leq s<t$ and for any bounded continuous function $\varphi:\C^n\rightarrow\R$,
    $$ \E\Big[ \varphi\big(z_\varepsilon^\theta(s_1),\dots,z_\varepsilon^\theta(s_n)\big) \big((x_\varepsilon^\theta(t)-x_\varepsilon^\theta(s))^2-(t-s)\big) \Big] $$
  and
    $$ \E\Big[ \varphi\big(z_\varepsilon^\theta(s_1),\dots,z_\varepsilon^\theta(s_n)\big) \big((y_\varepsilon^\theta(t)-y_\varepsilon^\theta(s))^2-(t-s)\big) \Big] $$
  converge to zero as $\varepsilon$ tends to zero.   Notice that, in our case,
    \begin{eqnarray*}
      && \E\Big[ \varphi\big(z_\varepsilon^\theta(s_1),\dots,z_\varepsilon^\theta(s_n)\big) \big(x_\varepsilon^\theta(t)-x_\varepsilon^\theta(s)\big)^2 \Big] \\
      &=&\E\left[ \varphi\big(z_\varepsilon^\theta(s_1),\dots,z_\varepsilon^\theta(s_n)\big) \left(\varepsilon\int_{\frac{2s}{\varepsilon^2}}^{\frac{2t}{\varepsilon^2}}(-1)^{G+N'_x}\cos(\theta N_x)\,dx\right)^2 \right] \\
      &=&2\varepsilon^2 \int_{\frac{2s}{\varepsilon^2}}^{\frac{2t}{\varepsilon^2}}\int_{\frac{2s}{\varepsilon^2}}^{x_2} \E\left[ \varphi\big(z_\varepsilon^\theta(s_1),\dots,z_\varepsilon^\theta(s_n)\big) (-1)^{N'_{x_2}+N'_{x_1}}  \right.
\\ & &\quad\qquad \times
\left. \cos(\theta N_{x_1})\cos(\theta N_{x_2}) \right] \,dx_1dx_2. \\
    \end{eqnarray*}
The proof of the real part  follows again the structure of Section 3.2 in \cite{B}
using Lemma \ref{lemma2}. The  imaginary part can be done similarly.

Finally we have to prove that $<Re[Z^{\theta}],Im[Z^{\theta}]>_t=0$.
It is sufficient to show that for any $s_1\leq\dots\leq
s_n\leq s<t$ and for any bounded continuous function
$\varphi:\C^n\rightarrow\R$,
    $$ \E\big[ \varphi\big(z_\varepsilon^\theta(s_1),\dots,z_\varepsilon^\theta(s_n)\big) \big(x_\varepsilon^\theta(t)-x_\varepsilon^\theta(s)\big) \big(y_\varepsilon^\theta(t)-y_\varepsilon^\theta(s)\big) \big] $$
  converges to zero as $\varepsilon$ tends to zero. But we obtain
  this convergence using similar calculations and statement  b) of Lemma \ref{lemma2}.

\hfill$\square$

\bigskip

{\it Proof of  Theorem \ref{general}}.  Taking into account the proof of Theorem \ref{teofeble} it remains
only to check that for $i\neq j$, and $\theta_i,\theta_j$ in the
conditions of  Theorem \ref{general}, \linebreak {\small
$<Re[Z^{\theta_i}],Re[Z^{\theta_j}]>_t=0$,
$<Im[Z^{\theta_i}],Im[Z^{\theta_j}]>_t=0$ } and {\small
$<Im[Z^{\theta_i}],Re[Z^{\theta_j}]>_t=0$. }
But it can be proved following the proof of Theorem 2 in
\cite{BR1} and taking into account our Lemma \ref{lematecnic}.

\hfill$\square$

\section{Proof of strong convergence }\label{cap_realp}

In this section, we will prove the strong convergence when
$\varepsilon$ tends to zero of the processes
$\{z_\varepsilon^\theta(t);\,t\in[0,1]\}$ defined in Section 2. 

\bigskip

{\it Proof of Theorem \ref{resultat}}. We will study the strong
convergence of $x_\varepsilon^\theta$,
  \begin{equation}
    x_\varepsilon^\theta(t)= \frac{2}{\varepsilon}(-1)^G\int_0^t(-1)^{N'_{\frac{2r}{\varepsilon^2}}}\cos{(\theta N_{\frac{2r}{\varepsilon^2}})}\,dr,  \label{processreal}
  \end{equation}
the real part of the processes
$z_\varepsilon^\theta$,  to a standard Brownian motion
$\{x_t;\,t\in[0,1]\}$ when $\varepsilon$ tends to 0. More precisely, we will prove
 that there exist realizations $\{x_\varepsilon^\theta(t),\,t\geq0\}$ of the above process on the same probability space of a Brownian motion process $\{x(t),\,t\geq0\}$ such that
\begin{equation}
 \lim_{\varepsilon\rightarrow\infty} \max_{0\leq t\leq1} |x_\varepsilon^\theta(t)-x(t)|=0 \quad a.s. \label{fort1}
\end{equation}
 The convergence
of the imaginary part of $z_\varepsilon$ to another
standard Brownian motion $\{y_t;\,t\in[0,1]\}$,  independent of
$\{x_t,t\geq0\}$, follows the same proof. We will follow the method used in \cite{art G-H-RM} to prove the
strong convergence  to a standard
Brownian motion.
We will divide the proof in five steps.

\bigskip

{\it Step 1: Definitions of the processes.} Let $(\Omega,\cal{F},\cal{P})$ be the probability space for a standard Brownian motion $\{x_t,t\geq0\}$ with $x(0)=0$ and let us define:

\begin{enumerate}

\item   for each $\varepsilon>0$,  $\{\epsilon_m^\varepsilon\}_{m \ge 1}$  a sequence of independent identically distributed random variables with law exponential of parameter $\frac{2}{\varepsilon}$, independent of the Brownian motion $x$,

\item $\{ \eta_m \}_{m \ge 1}$ a sequence of independent identically distributed random variables with law Bernoulli($\frac12$), independent of  $x$ and $\{\epsilon_m^\varepsilon\}_{m \ge 1}$ for all $\varepsilon$.

\item $\{ k_m \}_{m \ge 1}$ a sequence of independent identically distributed random variables such that
$P(k_1=1)=P(k_1=-1)=\frac12$ , independent of  $x$, $\{ \eta_m \}_{m \ge 1}$ and $\{\epsilon_m^\varepsilon\}_{m \ge 1}$ for all $\varepsilon$.

\end{enumerate}

Using these random variables we are able to introduce the following ones:

\begin{enumerate}

\item  $\{b_m \}_{m \ge 0}$ such that $b_0=0$ and
$b_m=\sum_{j=1}^{m}\eta_j$ for $m\geq1$. Clearly $b_m$ has a
Binomial distribution of parameters $(m,\frac12)$ and, for all
$n\in\{0,1,\dots,m\}$,
$P(b_{m+1}=n|b_m=n)=P(b_{m+1}=n+1|b_m=n)=\frac12$.

\item  $\{\xi_m^{\varepsilon,\theta}\}_{m \ge 1}=\{|\cos{(b_{m-1}\theta)}|\epsilon_m^\varepsilon\}_{m\ge 1}$ . This family of random variables is clearly independent of $x$.

\end{enumerate}

  Let $\mathscr{B}$ be the $\sigma$-algebra generated by $\{b_m\}_{m\ge 1}$. The sequence of random variables $\{k_m\xi_m^{\varepsilon,\theta}\}_{m \ge 0}$ satisfies
    \begin{eqnarray*}
      &&\E(k_m\xi_m^{\varepsilon,\theta}|\mathscr{B})=0,\\
      &&Var(k_m\xi_m^{\varepsilon,\theta}|\mathscr{B})=\E[(\xi_m^{\varepsilon,\theta})^2|\mathscr{B}]=\frac{\varepsilon^2}{2}[\cos(b_{m-1}\theta)]^2.
    \end{eqnarray*}
By Skorokhod's theorem (\cite{sko}  page 163 or \textit{Lemma 2} in \cite{art G-G2}) for each $\varepsilon>0$ there exists a sequence $\sigma_1^{\varepsilon,\theta},\sigma_2^{\varepsilon,\theta},...$ of nonnegative random variables on $(\Omega,\cal{F},\cal{P})$ so that the sequence $x(\sigma_1^{\varepsilon,\theta}), x(\sigma_1^{\varepsilon,\theta}+\sigma_2^{\varepsilon,\theta}),...,$ has the same distribution as $k_1\xi_1^{\varepsilon,\theta},k_1\xi_1^{\varepsilon,\theta}+k_2\xi_2^{\varepsilon,\theta},...,$ and, for each $m$,
    $$ \E(\sigma_m^{\varepsilon,\theta}|\mathscr{B})=Var(k_m\xi_m^{\varepsilon,\theta}|\mathscr{B})=\frac{\varepsilon^2}{2}[\cos(b_{m-1}\theta)]^2.$$
For each $\varepsilon$  we define $\gamma_0^{\varepsilon,\theta}\equiv0$ and for each $m$
    $$ \gamma_m^{\varepsilon,\theta}=|\beta_m^{\varepsilon,\theta}|^{-1} \left| x\left(\sum_{j=0}^m\sigma_j^{\varepsilon,\theta}\right)-x\left(\sum_{j=0}^{m-1}\sigma_j^{\varepsilon,\theta}\right) \right|, $$
where $\sigma_0^{\varepsilon,\theta}\equiv0$ and $$ \beta_m^{\varepsilon,\theta}=\frac{2}{\varepsilon} \cos(b_{m-1}\theta). $$
Then, the random variables $\gamma_1^{\varepsilon,\theta},\gamma_2^{\varepsilon,\theta},...,$ are independent with common exponential distribution with parameter $\frac{4}{\varepsilon^2}$.
 Indeed
  for any $0\leq n<m$, $x, y \in \R$
    \begin{eqnarray*}
      &&P(\gamma_{n+1}^{\varepsilon,\theta}\leq x,\gamma_{m+1}^{\varepsilon,\theta}\leq y)\\
      &=& \sum_{j=0}^n\sum_{k=j}^{j+(m-n)} P(\gamma_{n+1}^{\varepsilon,\theta}\leq x,\,\gamma_{m+1}^{\varepsilon,\theta}\leq y\,|\,b_n=j,b_m=k) P(b_n=j,b_m=k) \\
      &=& (1-e^{-\frac{4x}{\varepsilon^2}})(1-e^{-\frac{4y}{\varepsilon^2}}) \,\sum_{j=0}^n\sum_{k=j}^{j+(m-n)} P(b_n=j,b_m=k) \\
      &=& P(\gamma_{n+1}^{\varepsilon,\theta}\leq x)  P(\gamma_{m+1}^{\varepsilon,\theta}\leq
      y),
    \end{eqnarray*}
 using that when $b_n$ and $b_m$ are known, $\gamma_{n+1}^{\varepsilon,\theta}$ and $\gamma_{m+1}^{\varepsilon,\theta}$ are independent.

\bigskip

Now, we define $x_\varepsilon^\theta(t), t \ge 0$  to be piecewise linear satisfying
\begin{equation} x_\varepsilon^\theta\left(\sum_{j=1}^{m}\gamma_j^{\varepsilon,\theta}\right)=x\left(\sum_{j=1}^{m}\sigma_j^{\varepsilon,\theta}\right), \qquad  m \ge 1 \label{defreal}
\end{equation}
  and $x_\varepsilon^\theta(0)\equiv0$. Observe that the process $x_\varepsilon^\theta$ has slope $\pm|\beta_m^{\varepsilon,\theta}|$ in the interval $[\sum_{j=1}^{m-1}\gamma_j^{\varepsilon,\theta},\sum_{j=1}^{m}\gamma_j^{\varepsilon,\theta}]$.

\bigskip

  Let $\rho_m^{\varepsilon,\theta}$ be the time of the $m$th change of the absolute values of $\beta_j^{\varepsilon,\theta}$'s, i.e. the time when  $\beta_j^{\varepsilon,\theta}=\frac{2}{\varepsilon}\cos[(m-1)\theta]$ and $\beta_{j+1}^{\varepsilon,\theta}=\frac{2}{\varepsilon}\cos(m\theta)$, that is  when the slope of $x_\varepsilon^\theta(\cdot)$ changes from $\pm\frac{2}{\varepsilon}|\cos[(m-1)\theta]|$ to $\pm\frac{2}{\varepsilon}|\cos(m\theta)|$. Then the increments $\rho_m^{\varepsilon,\theta}-\rho_{m-1}^{\varepsilon,\theta}, m \geq 1$,  with $\rho_0^{\varepsilon,\theta}\equiv0$ are independent and exponentially distributed with a parameter $\frac{2}{\varepsilon^2}$. Indeed, since
   $$P\left(\beta_m^{\varepsilon,\theta}=\frac{2}{\varepsilon}\cos(n\theta)\,\big|\,\beta_{m-1}^{\varepsilon,\theta}=\frac{2}{\varepsilon}\cos[(n-1)\theta]\right)=\frac12$$ for every $n=0,1,\cdots,m$, we can write $\rho_1^{\varepsilon,\theta}=\gamma_1^{\varepsilon,\theta}+\dots+\gamma_{\widehat{N}}^{\varepsilon,\theta}$, where $P(\widehat{N}=n)=2^{-n}$ for $n\in\N$. Therefore $\rho_1^{\varepsilon,\theta}$ has an exponential distribution with parameter $\frac{2}{\varepsilon^2}$  (see \cite{fel}). Likewise, each increment $\rho_m^{\varepsilon,\theta}-\rho_{m-1}^{\varepsilon,\theta}$ has an exponential distribution with parameter $\frac{2}{\varepsilon^2}$ and the increments are independent since they are sum of disjoint blocks of the $\gamma_m^{\varepsilon,\theta}$'s.

On the other hand, let $\tau_m^\varepsilon$ be the time of the $m$th
change of the sign of the slopes. Following the same arguments  for
the times $\rho_m^{\varepsilon,\theta}$ we get that the increments
$\tau_m^\varepsilon-\tau_{m-1}^\varepsilon$, for each $m$, with
$\tau_0^\varepsilon\equiv0$, are independent and exponentially
distributed with a parameter $\frac{2}{\varepsilon^2}$. Moreover the
increments are sum of disjoint blocks of the
$\gamma_m^{\varepsilon,\theta}$'s.

Thus  $x_\varepsilon^\theta$ is a realization of the process (\ref{processreal}).

\bigskip

{\it Step 2 : Decomposition of the convergence.}
Let us come back to the proof of (\ref{fort1}). Recalling that $\gamma_0^{\varepsilon,\theta}\equiv\sigma_0^{\varepsilon,\theta}\equiv0$, by (\ref{defreal}) and the uniform continuity of Brownian motion on $[0,1]$, we have almost surely
\begin{eqnarray*}
  \lim_{\varepsilon\rightarrow0} \,\,\max_{0\leq t\leq1} \left| x_\varepsilon^\theta(t)-x(t) \right| &=& \lim_{\varepsilon\rightarrow0} \,\,\max_{0\leq m\leq\frac{4}{\varepsilon}} \left| x_\varepsilon^\theta\left(\sum_{j=1}^m\gamma_j^{\varepsilon,\theta}\right)-x\left(\sum_{j=1}^m\gamma_j^{\varepsilon,\theta}\right) \right| \\
  &=& \lim_{\varepsilon\rightarrow0} \,\,\max_{0\leq m\leq\frac{4}{\varepsilon}} \left| x\left(\sum_{j=1}^m\sigma_j^{\varepsilon,\theta}\right)-x\left(\sum_{j=1}^m\gamma_j^{\varepsilon,\theta}\right) \right|,
\end{eqnarray*}
and it reduces the proof to check that,
    \begin{equation}\nonumber
      \lim_{\varepsilon\rightarrow0} \,\max_{1\leq m\leq\frac{4}{\varepsilon^2}} \left| \gamma_1^{\varepsilon,\theta}+\dots+\gamma_m^{\varepsilon,\theta}-m\frac{\varepsilon^2}{4} \right|=0 \quad a.s.,
    \end{equation}
and that
    \begin{equation}\label{limdif} \lim_{\varepsilon\rightarrow0} \,\max_{1\leq m\leq\frac{4}{\varepsilon^2}} \left| \sigma_1^{\varepsilon,\theta}+\dots+\sigma_m^{\varepsilon,\theta}-m\frac{\varepsilon^2}{4} \right|=0 \quad a.s.  \end{equation}

\bigskip
The first limit can be obtained easily   by Borel-Cantelli lemma since  by Kolmogorov's inequality, for each $\alpha>0$, we have
    \begin{eqnarray*}
      P\left(\max_{1\leq m\leq\frac{4}{\varepsilon^2}}\left| \gamma_1^{\varepsilon,\theta}+\dots+\gamma_m^{\varepsilon,\theta}-m\frac{\varepsilon^2}{4} \right|\geq\alpha\right) &\leq& \frac{1}{\alpha^2}\sum_{m=1}^{\frac{4}{\varepsilon^2}} Var(\gamma_m^{\varepsilon,\theta}) \\
      &=& \frac{\varepsilon^2}{4\alpha^2}.
    \end{eqnarray*}

In order to deal with (\ref{limdif}) we can use the decompostion
\begin{equation}\label{exp1}
\max_{1\leq m\leq\frac{4}{\varepsilon^2}} \left| \sum_{j=1}^m\sigma_j^{\varepsilon,\theta}-m\frac{\varepsilon^2}{4} \right| \! \leq \! \max_{1\leq m\leq\frac{4}{\varepsilon^2}} \left| \sum_{j=1}^m(\sigma_j^{\varepsilon,\theta}-\alpha_j^{\varepsilon,\theta}) \right| + \max_{1\leq m\leq\frac{4}{\varepsilon^2}} \left| \sum_{j=1}^m\alpha_j^{\varepsilon,\theta}-m\frac{\varepsilon^2}{4} \right|, \end{equation}
  where $\alpha_j^{\varepsilon,\theta}:=\E(\sigma_j^{\varepsilon,\theta}|\mathscr{B})=\frac{\varepsilon^2}{2}[\cos(b_{j-1}\theta)]^2$ and
    \begin{eqnarray}\label{eq10}
      &&\left| \sum_{j=1}^m\alpha_j^{\varepsilon,\theta}-m\frac{\varepsilon^2}{4} \right| = \left| \sum_{j=1}^m\frac{\varepsilon^2}{2}[\cos(b_{j-1}\theta)]^2-m\frac{\varepsilon^2}{4} \right|  \\
      &=& \frac{\varepsilon^2}{4} \left| \,\sum_{j=1}^m[1+\cos(2b_{j-1}\theta)]-m\, \right| \nonumber
      = \frac{\varepsilon^2}{4} \left| \sum_{j=1}^m\cos(2b_{j-1}\theta) \right|,
    \end{eqnarray}
Notice now, that  for a fixed $m$,
    \begin{eqnarray}\label{eq11}
      \sum_{j=1}^m\cos(2b_{j-1}\theta) = \sum_{\begin{subarray}{l} 0\leq k\leq n\\ B_{n,m}\end{subarray}}T_k\cos(2k\theta) + Z_{n+1}\cos[2(n+1)\theta],
    \end{eqnarray}
  where $B_{n,m}:=\{k\in\{0,\dots,n\}\mbox{ s.t. }T_0+\cdots+T_n\leq m,\,T_0+\cdots+T_{n+1}>m\}$, $T_k$ are independent identically distributed random variables with $T_k\sim\mbox{Geom}\big(\frac12\big)$ for each $k=0,1,2,\dots$, that is $P(T_k=j)=2^{-j}$ for $j\geq1$ and $\E(T_k)=Var(T_k)=2$, and $0\leq Z_{n+1}\leq T_{n+1}$.
Hence, putting together (\ref{exp1}), (\ref{eq10}) and (\ref{eq11}), it follows that
    \begin{eqnarray*}
& &\max_{1\leq m\leq\frac{4}{\varepsilon^2}} \left| \sum_{j=1}^m\sigma_j^{\varepsilon,\theta}-m\frac{\varepsilon^2}{4} \right| \leq \max_{1\leq m\leq\frac{4}{\varepsilon^2}} \left| \sum_{j=1}^m(\sigma_j^{\varepsilon,\theta}-\alpha_j^{\varepsilon,\theta}) \right|\  \\ & &+ \max_{1\leq m\leq\frac{4}{\varepsilon^2}} \frac{\varepsilon^2}{4} \left| \sum_{\begin{subarray}{l} 0\leq k\leq n\\ B_{n,m}\end{subarray}}T_k\cos(2k\theta) \right| +\max_{\begin{subarray}{c} 1\leq m\leq\frac{4}{\varepsilon^2}\\ B_{n,m}\end{subarray}} \frac{\varepsilon^2}{4} \Big| Z_{n+1}\cos[2(n+1)\theta] \Big| \\
      & & := L_1^\varepsilon + L_2^\varepsilon+L_3^\varepsilon,
    \end{eqnarray*}
and reduces the proof of  (\ref{limdif}) to check that $\lim_{\varepsilon \to 0}  (L_1^\varepsilon + L_2^\varepsilon+L_3^\varepsilon) = 0 \,\, a.s$.

\bigskip

\goodbreak

{\it Step 3: Study of $L_1^\varepsilon$.}  Let $M_n:=\sum_{j=1}^n(\sigma_j^{\varepsilon,\theta}-\alpha_j^{\varepsilon,\theta})$. Let $\mathcal{B}_n$ denote the $\sigma$-algebra generated by $\{\mathscr{B},\sigma_k^{\varepsilon,\theta};\, k\leq n\}$. Clearly, $M_n$ is $\mathcal{B}_n$-measurable and, since $\E(\sigma_n^{\varepsilon,\theta}-\alpha_n^{\varepsilon,\theta}|\mathscr{B})=0$, $\E(M_n|\mathcal{B}_{n-1})=M_{n-1}$. So $|M_n|$ is a submartingale. By Doob's martingale inequality, for each $\alpha>0$
\begin{equation} P\left( \max_{1\leq m\leq\frac{4}{\varepsilon^2}} |M_m|\geq\alpha \right) \leq \frac{1}{\alpha^2} \E\Big(\big|M_{[\frac{4}{\varepsilon^2}]}\big|^2\Big), \label{dd} \end{equation}
where  $[v]$ is the integer part of $v$.
Fixed $b_j$'s, $\{\sigma_j^{\varepsilon,\theta}\}$'s are independent, and  so
\begin{equation}\label{aa}
      \E[(\sigma_j^{\varepsilon,\theta}-\alpha_j^{\varepsilon,\theta})(\sigma_k^{\varepsilon,\theta}-\alpha_k)]= \E\big[\E\big(\sigma_j^{\varepsilon,\theta}-\alpha_j^{\varepsilon,\theta}\big|\mathscr{B}\big) \E\big(\sigma_k^{\varepsilon,\theta}-\alpha_k\big|\mathscr{B}\big)\big]
      = 0.
    \end{equation}
On the other hand
\begin{equation}\label{bb}\E[\sigma_j^{\varepsilon,\theta}\alpha_j^{\varepsilon,\theta}] = \E[\E(\sigma_j^{\varepsilon,\theta}\alpha_j^{\varepsilon,\theta}|\mathscr{B}] ]= \E[\alpha_j^{\varepsilon,\theta}\E(\sigma_j^{\varepsilon,\theta}|\mathscr{B}]] = \E[(\alpha_j^{\varepsilon,\theta})^2]. \end{equation}
 Using (\ref{aa}) and (\ref{bb}), we get that
   \begin{eqnarray}\label{eq_sigma1}
  & &    \E\Big(\big|M_{[\frac{4}{\varepsilon^2}]}\big|^2\Big) \nonumber  \\
& &=
      \sum_{j=1}^{[\frac{4}{\varepsilon^2}]}\E[(\sigma_j^{\varepsilon,\theta}-\alpha_j^{\varepsilon,\theta})^2]+2\sum_{j=1}^{[\frac{4}{\varepsilon^2}]-1}\sum_{k=j+1}^{[\frac{4}{\varepsilon^2}]}\E[(\sigma_j^{\varepsilon,\theta}-\alpha_j^{\varepsilon,\theta})(\sigma_k^{\varepsilon,\theta}-\alpha_k^{\varepsilon,\theta})]\nonumber \\
&&  = \sum_{j=1}^{[\frac{4}{\varepsilon^2}]}\E[(\sigma_j^{\varepsilon,\theta})^2] -
      \sum_{j=1}^{[\frac{4}{\varepsilon^2}]}\E[(\alpha_j^{\varepsilon,\theta})^2].
     \end{eqnarray}
Recalling that by Skorokhod's theorem (\cite{sko},  page 163) there exists a positive constant $C_1$ such that
    $$ \E[(\sigma_j^{\varepsilon,\theta})^2|\mathscr{B}] \leq C_1\,\E[(\xi_j^{\varepsilon,\theta})^4|\mathscr{B}] = C_1 4!\left(\frac{\varepsilon}{2}\right)^4[\cos(b_{j-1}\theta)]^4, $$
 (\ref{eq_sigma1}) can be bounded by
    \begin{eqnarray*}
&&\E\Big(\big|M_{[\frac{4}{\varepsilon^2}]}\big|^2\Big)
            \leq C_1 4!\left(\frac{\varepsilon}{2}\right)^4\sum_{j=1}^{[\frac{4}{\varepsilon^2}]}\E[\cos(b_{j-1}\theta)^4] - \left(\frac{\varepsilon^2}{2}\right)^2\sum_{j=1}^{[\frac{4}{\varepsilon^2}]}\E[\cos(b_{j-1}\theta)^4] \\
      &&  =C\varepsilon^4 \,\sum_{j=1}^{[\frac{4}{\varepsilon^2}]}\E[\cos(b_{j-1}\theta)^4]
      \leq
      4C\varepsilon^2,
    \end{eqnarray*}
where $C$ is a positive constant.
So, from (\ref{dd}) we obtain that for any $\alpha$
    $$ P\left( \max_{1\leq m\leq\frac{4}{\varepsilon^2}} |M_m|\geq\alpha \right) \leq \frac{4C\varepsilon^2}{\alpha^2},$$
and by Borel-Cantelli lemma it follows that
    $ \lim_{\varepsilon\rightarrow0} L_1^\varepsilon=0 \, a.s.$

\bigskip

{\it Step 4: Study of $L_2^\varepsilon$.}  Since $n\leq m-1$, we have
    \begin{eqnarray*}
   L_2^\varepsilon &\leq& \max_{0\leq n\leq\frac{4}{\varepsilon^2}-1} \frac{\varepsilon^2}{4} \left| \sum_{k=0}^n T_k\cos(2k\theta) \right| \\
      &\leq& \max_{0\leq n\leq\frac{4}{\varepsilon^2}-1} \frac{\varepsilon^2}{4} \left| \sum_{k=0}^n (T_k-2)\cos(2k\theta) \right| + \max_{0\leq n\leq\frac{4}{\varepsilon^2}-1} \frac{\varepsilon^2}{2} \left| \sum_{k=0}^n \cos(2k\theta) \right|
  \\ & :=&  L_{21}^\varepsilon + L_{22}^\varepsilon.
    \end{eqnarray*}

Let us prove that  $ L_{21}^\varepsilon$ vanishes when $\varepsilon$ goes to 0. Let $\mathcal{F}^n$ denote the $\sigma$-algebra generated by $T_k$ for $k\leq n$. Define $M'_n:=\frac{\varepsilon^2}{4}\sum_{k=0}^n(T_k-2)\cos(2k\theta)$. It is easy to see that $M'_n$ is a martingale. By Doob's martingale inequality, for each $\alpha>0$
  \begin{eqnarray*}
    &&P\left( \max_{0\leq n\leq\frac{4}{\varepsilon^2}-1} |M'_n|>\alpha \right)\\
     &\leq& \frac{1}{\alpha^2} \,\E\left(\big|M'_{[\frac{4}{\varepsilon^2}]-1}\big|^2\right)
    = \frac{1}{\alpha^2} \,\E\left( \frac{\varepsilon^4}{16}\left|\sum_{k=0}^{[\frac{4}{\varepsilon^2}]-1}(T_k-2)\cos(2k\theta)\right|^2\,
    \right)\\
    &=& \frac{1}{\alpha^2}\,\frac{\varepsilon^4}{16} \,\sum_{k=0}^{[\frac{4}{\varepsilon^2}]-1}\E[(T_k-2)^2]\cos^2(2k\theta)
    \leq  \frac{1}{\alpha^2}\,\frac{\varepsilon^4}{16}\,\frac{4}{\varepsilon^2}
    = \frac{\varepsilon^2}{2\alpha^2},
  \end{eqnarray*}
  where we have used that $(T_k-2)$'s are independent and centered.
Therefore, by Borell-Cantelli lemma
   $ \lim_{\varepsilon\rightarrow0}  L_{21}^\varepsilon=0 $  a.s.

 On the other hand, since for any $n$, we get that
    \begin{eqnarray*}
      &&\sum_{k=0}^n\cos(2k\theta) \\&=& \frac12\left( \sum_{k=0}^ne^{i2k\theta}+\sum_{k=0}^ne^{-i2k\theta} \right)
      = \frac12\left( \frac{1-e^{i2(n+1)\theta}}{1-e^{i2\theta}}+\frac{1-e^{-i2(n+1)\theta}}{1-e^{-i2\theta}} \right) \\
 &=&
 \frac12\left(1+\frac{-\cos\big(2(n+1)\theta\big)+\cos(2n\theta)}{1-\cos(2\theta)}\right)
 \leq\frac12\left(1+\frac{2}{1-\cos(2\theta)}\right),
    \end{eqnarray*}
it yields that
   $ \lim_{\varepsilon\rightarrow0}  L_{22}^\varepsilon=0 $, a.s.

\bigskip

{\it Step 5: Study of $L_3^\varepsilon$.}
 Since $n\leq m-1$ and $Z_{n+1}\leq T_{n+1}$,
  \begin{equation}\label{eq13}
 L_3^\varepsilon\leq \max_{\begin{subarray}{c} 1\leq m\leq\frac{4}{\varepsilon^2}\\ B_{n,m}\end{subarray}} \frac{\varepsilon^2}{4} T_{n+1} \leq \max_{0\leq n\leq\frac{4}{\varepsilon^2}-1} \frac{\varepsilon^2}{4} T_{n+1}.
  \end{equation}
  Thus, it is sufficient to see that
    $$ \lim_{n\rightarrow\infty}\,\max_{1\leq k\leq n}\,\frac{T_k}{n}=0 \qquad \mbox{a.s.} $$
 Fixed $\delta>0$, we have that
    \begin{eqnarray*}
      P\left( \max_{1\leq k\leq n}\frac{T_k}{n}>\delta \right) &=& P\left( \max_{1\leq k\leq n}T_k>\delta n \right) = 1-\Big(P(T_k\leq\delta n)\Big)^n \\
      &=& 1-\left( \sum_{j=1}^{[\delta n]}\frac{1}{2^j} \right)^n = 1-\left( 1-\frac{1}{2^{[\delta n]}}
      \right)^n.
    \end{eqnarray*}
 Since
    $ \lim_{n\rightarrow\infty} 1-\left( 1-\frac{1}{2^{[\delta n]}} \right)^n = 0 $ it follows that
    $$ \max_{1\leq k\leq n}\,\frac{T_k}{n} \xrightarrow[n\rightarrow\infty]{P} 0. $$
 Finally the almost sure convergence follows   for the fact that
    $$ \sum_{n=1}^\infty \left[ 1-\left( 1-\frac{1}{2^{[\delta n]}} \right)^n \right]<\infty,$$
proved in Lemma \ref{serie}   in the Appendix.

\hfill $\square$

\section{Proof of rate of convergence}\label{rates}

  In this section we will prove the rate of convergence of the processes $z_\varepsilon^\theta(t)$. Although the proof follows the structure of part b) of Theorem 1 in \cite{art G-G}, some of the terms appearing have to be computed in a new way.

\bigskip

{\it Proof of Theorem \ref{thm_rate}.}
       To prove the theorem it is sufficient to check that, for any $q>0$,
    $$ P\left( \max_{0\leq t\leq1} |x_\varepsilon^\theta(t)-x(t)| > \alpha\,\varepsilon^{\frac12}\left(\log{\frac1\varepsilon}\right)^\frac52 \right) = o(\varepsilon^q)  \qquad \mbox{as} \quad \varepsilon\rightarrow0 $$
  and
    $$ P\left( \max_{0\leq t\leq1} |y_\varepsilon^\theta(t)-y(t)| > \alpha'\,\varepsilon^{\frac12}\left(\log{\frac1\varepsilon}\right)^\frac52 \right) = o(\varepsilon^q)  \qquad \mbox{as} \quad \varepsilon\rightarrow0 $$
  where $\alpha$ and $\alpha'$ are two positive constants depending on $q$.
We will analyze the rate of convergence for the real part. The results for the imaginary part can be  obtained by similar computations.

  Recall  that $\gamma_0^{\varepsilon,\theta}\equiv\sigma_0^{\varepsilon,\theta}\equiv0$ and define
    $$ \Gamma_m^{\varepsilon,\theta}=\sum_{j=0}^m \gamma_j^{\varepsilon,\theta} \qquad \mbox{and} \qquad \Lambda_m^{\varepsilon,\theta}=\sum_{j=0}^m \sigma_j^{\varepsilon,\theta}. $$
Set
    $$ J^\varepsilon \equiv \max_{0\leq m\leq\frac{4}{\varepsilon^2}} \,\max_{0\leq r\leq\gamma_{m+1}^{\varepsilon,\theta}} \big|x_\varepsilon^\theta(\Gamma_m^{\varepsilon,\theta}+r)-x(\Gamma_m^{\varepsilon,\theta}+r)\big|. $$
Since $ x_\varepsilon^\theta$ is piecewise linear and using  the definition of $\gamma_m^{\varepsilon,\theta}$, notice that
    \begin{eqnarray*}
      x_\varepsilon^\theta(\Gamma_m^{\varepsilon,\theta}+r) 
      &=& x(\Lambda_m^{\varepsilon,\theta})+\frac{x(\Lambda_{m+1}^{\varepsilon,\theta})-x(\Lambda_m^{\varepsilon,\theta})}{\gamma_{m+1}^{\varepsilon,\theta}}\,r \\
      &=& x(\Lambda_m^{\varepsilon,\theta})+|\beta_{m+1}^{\varepsilon,\theta}| \cdot \sgn\Big(x(\Lambda_{m+1}^{\varepsilon,\theta})-x(\Lambda_m^{\varepsilon,\theta})\Big) r.
    \end{eqnarray*}

  Thus,
    \begin{eqnarray*}
      J^\varepsilon  &\leq& \max_{0\leq m\leq\frac{4}{\varepsilon^2}} \big|x(\Lambda_m^{\varepsilon,\theta})-x(\Gamma_m^{\varepsilon,\theta})\big| + \max_{1\leq m\leq\frac{4}{\varepsilon^2}+1} |\beta_m^{\varepsilon,\theta}|\gamma_m^{\varepsilon,\theta} \\ && +\max_{0\leq m\leq\frac{4}{\varepsilon^2}} \,\max_{0\leq r\leq\gamma_{m+1}^{\varepsilon,\theta}} \big|x(\Gamma_m^{\varepsilon,\theta})-x(\Gamma_m^{\varepsilon,\theta}+r)\big| \\
      &\leq& \max_{0\leq m\leq\frac{4}{\varepsilon^2}} \Big|x(\Lambda_m^{\varepsilon,\theta})-x\Big(\frac{m\varepsilon^2}{4}\Big)\Big| + \max_{0\leq m\leq\frac{4}{\varepsilon^2}} \Big|x(\Gamma_m^{\varepsilon,\theta})-x\Big(\frac{m\varepsilon^2}{4}\Big)\Big| \\
      &&  + \max_{0\leq m\leq\frac{4}{\varepsilon^2}} \,\max_{0\leq r\leq\gamma_{m+1}^{\varepsilon,\theta}} \big|x(\Gamma_m^{\varepsilon,\theta})-x(\Gamma_m^{\varepsilon,\theta}+r)\big| + \max_{1\leq m\leq\frac{4}{\varepsilon^2}+1} |\beta_m^{\varepsilon,\theta}|\gamma_m^{\varepsilon,\theta} \\
      &:=& J_1^\varepsilon+J_2^\varepsilon+J_3^\varepsilon+J_4^\varepsilon,
    \end{eqnarray*}
and for any $a_\varepsilon>0$,
$$ P(J^\varepsilon>a_\varepsilon) \leq \sum_{j=1}^4P\Big(J_j^\varepsilon>\frac{a_\varepsilon}{4}\Big) := I_1^\varepsilon+I_2^\varepsilon+I_3^\varepsilon+I_4^\varepsilon.$$

We will study the four terms separately.

\bigskip

{\it 1. Study of the term $I_4^\varepsilon.$}
 Since $\beta_m^{\varepsilon,\theta}=\frac{2}{\varepsilon}\cos(b_{m-1}\theta)$ and
  $\gamma_m^{\varepsilon,\theta}$'s are independent exponentially distributed variables with parameter $\frac{4}{\varepsilon^2}$, this term can be handled as term D in Theorem 1 in \cite{art G-G}. Indeed
    \begin{eqnarray*}
     I_4^\varepsilon &=& P\left( \max_{1\leq m\leq\frac{4}{\varepsilon^2}+1} |\beta_m^{\varepsilon,\theta}|\gamma_m^{\varepsilon,\theta}>\frac{a_\varepsilon}{4} \right)
      \leq P\left( \max_{1\leq m\leq\frac{4}{\varepsilon^2}+1} \gamma_m^{\varepsilon,\theta}>\frac{a_\varepsilon\varepsilon}{8} \right) \\
      &=& 1-P\left( \gamma_m^{\varepsilon,\theta}\leq\frac{a_\varepsilon\varepsilon}{8} \right)^{\frac{4}{\varepsilon^2}+1}
      = 1-\left( 1-e^{-\frac{a_\varepsilon}{2\varepsilon}}
      \right)^{\frac{4}{\varepsilon^2}+1}.
      \end{eqnarray*}
For $a_\varepsilon$ of the type $\alpha\,\varepsilon^\frac12\left(\log\frac{1}{\varepsilon}\right)^\beta$, with $\alpha$ and $\beta$ are positive arbitrary fixed constants, we can see that
    \begin{equation}\label{limD}
      \lim_{\varepsilon\rightarrow0} \frac{1-\left( 1-e^{-\frac{a_\varepsilon}{2\varepsilon}} \right)^{\frac{4}{\varepsilon^2}+1}}{\varepsilon^q}=0 \qquad \mbox{a.s.}
    \end{equation}
and so $I_4^\varepsilon=o(\varepsilon^q)$. Thus, the result is true with
$\beta=\frac52$.

\bigskip

{\it 2. Study of the term $I_1^\varepsilon.$} Let $\delta_\varepsilon>0$. Using the same decomposition that in the previous Section (see (\ref{eq10}) and (\ref{eq11})) we can write
    \begin{eqnarray*}
      I_1^\varepsilon
      &\leq& P\left( \max_{0\leq m\leq\frac{4}{\varepsilon^2}} \, \max_{|s|\leq\delta_\varepsilon} \Big|x\Big(\frac{m\varepsilon^2}{4}+s\Big)-x\Big(\frac{m\varepsilon^2}{4}\Big)\Big|>\frac{a_\varepsilon}{4} \right) \\
      &&+ P\left( \max_{1\leq m\leq\frac{4}{\varepsilon^2}} \Big|\Lambda_m^{\varepsilon,\theta}-\frac{m\varepsilon^2}{4}\Big|>\delta_\varepsilon \right)\\
 &\leq& P\left( \max_{0\leq m\leq\frac{4}{\varepsilon^2}} \, \max_{|s|\leq\delta_\varepsilon} \Big|x\Big(\frac{m\varepsilon^2}{4}+s\Big)-x\Big(\frac{m\varepsilon^2}{4}\Big)\Big|>\frac{a_\varepsilon}{4} \right) \\
      &&+ P\left( \max_{1\leq m\leq\frac{4}{\varepsilon^2}} \bigg|\sum_{j=1}^m(\sigma_j^{\varepsilon,\theta}-\alpha_j^{\varepsilon,\theta})\bigg|>\frac{\delta_\varepsilon}{2} \right)  \\
      &&+
 P\left( \max_{1\leq m\leq\frac{4}{\varepsilon^2}} \frac{\varepsilon^2}{4} \Bigg|\sum_{\begin{subarray}{l} 0\leq k\leq n\\ B_{n,m}\end{subarray}}T_k\cos(2k\theta)\Bigg|>\frac{\delta_\varepsilon}{4} \right) \\
      &&+ P\left( \,\max_{\begin{subarray}{c} 1\leq m\leq\frac{4}{\varepsilon^2}\\ B_{n,m}\end{subarray}} \frac{\varepsilon^2}{4} \Big| Z_{n+1}\cos[2(n+1)\theta] \Big|>\frac{\delta_\varepsilon}{4} \right)\\
      &=& I_{11}^\varepsilon+ I_{12}^\varepsilon+ I_{13}^\varepsilon++ I_{14}^\varepsilon,
    \end{eqnarray*}
recalling that $\alpha_j^{\varepsilon,\theta}=\E(\sigma_j^{\varepsilon,\theta}|\mathscr{B})=\frac{\varepsilon^2}{2}[\cos(b_{j-1}\theta)]^2$ and where $B_{n,m}:=\{k\in\{0,\dots,n\}\mbox{ s.t. }T_0+\cdots+T_n\leq m,\,T_0+\cdots+T_{n+1}>m\}$, $T_k$ are independent identically distributed random variables with $T_k\sim\mbox{Geom}\big(\frac12\big)$ for each $k=0,1,2,\dots$ and $0\leq Z_{n+1}\leq T_{n+1}$.  We will study again the four terms separately.

\medskip

{\it 2.1. Study of the term $I_{12}^\varepsilon.$}
 In Section \ref{cap_realp} we have seen that $|M_n|=\big|\sum_{j=1}^m(\sigma_j^{\varepsilon,\theta}-\alpha_j^{\varepsilon,\theta})\big|$ is a submartingale, so
    \begin{eqnarray}
      I_{12}^\varepsilon &=& P\left( \max_{1\leq m\leq\frac{4}{\varepsilon^2}} \bigg|\sum_{j=1}^m\Big(\frac{4}{\varepsilon^2}\sigma_j^{\varepsilon,\theta}-2[\cos(b_{j-1}\theta)]^2\Big)\bigg|>\frac{2\delta_\varepsilon}{\varepsilon^2} \right) \nonumber \\
      &\leq& \left(\frac{\varepsilon^2}{2\delta_\varepsilon}\right)^{2p} \, \E\left[\left(\sum_{m=1}^{  [4/\varepsilon^2] }\bigg(\frac{4}{\varepsilon^2}\sigma_m^{\varepsilon,\theta}-2[\cos(b_{m-1}\theta)]^2\bigg)\right)^{2p}\right],
      \label{eq4}
    \end{eqnarray}
  for any $p\geq1$, by Doob's martingale inequality.

Set $Y_m:=\frac{4}{\varepsilon^2}\sigma_m^{\varepsilon,\theta}-2[\cos(b_{m-1}\theta)]^2$. Using H\"older's inequality, we obtain
    \begin{eqnarray}
      \E\left[\Bigg(\sum_{m=1}^{[4/\varepsilon^2]}Y_m\Bigg)^{2p}\right] &=& \sum_{\begin{subarray}{c} |u|=2p\\ u_m\neq1\,\forall m \end{subarray}}{2p\choose u} \E\Big(Y_1^{u_1}\cdots Y_{[4/\varepsilon^2]}^{u_{[4/\varepsilon^2]}}\Big) \label{eq5} \\
      &\leq& \sum_{\begin{subarray}{c} |u|=2p\\ u_m\neq1\,\forall m \end{subarray}}{2p\choose u} \big[\E\big(Y_1^{2p}\big)\big]^{u_1/2p}\cdots\big[\E\big(Y_{[4/\varepsilon^2]}^{2p}\big)\big]^{u_{[4/\varepsilon^2]}/2p}. \nonumber
    \end{eqnarray}
  where $u=(u_1,\dots,u_{[4/\varepsilon^2]})$ with $|u|=u_1+\cdots+u_{[4/\varepsilon^2]}$ and
    $$ {2p\choose u} = \frac{(2p)!}{u_1!\cdots u_{[4/\varepsilon^2]}!}. $$
  Notice that in the first equality we have used that if $u_m=1$ for any $m$, then $\E\big(Y_1^{u_1}\cdots Y_{[4/\varepsilon^2]}^{u_{[4/\varepsilon^2]}}\big)=0$. Inded, assume  that $u_m=1$, then
    \begin{eqnarray*}
      &&\E\big(Y_1^{u_1}\cdots Y_{[4/\varepsilon^2]}^{u_{[4/\varepsilon^2]}}\big)= \E\Big[\E\big(Y_1^{u_1}\cdots Y_{[4/\varepsilon^2]}^{u_{[4/\varepsilon^2]}}\big|\mathscr{B}\big)\Big] \\
      &=& \E\Big[\E\big(Y_1^{u_1}\big|\mathscr{B}\big)\cdots\E\big(Y_{m-1}^{u_{m-1}}\big|\mathscr{B}\big)\E\big(Y_m\big|\mathscr{B}\big)\E\big(Y_{m+1}^{u_{m+1}}\big|\mathscr{B}\big)\cdots\E\big(Y_{[4/\varepsilon^2]}^{u_{[4/\varepsilon^2]}}\big|\mathscr{B}\big)\Big]
    \end{eqnarray*}
that is clearly zero  since we have used that fixed $\{b_j\}$'s, $\{\sigma_j\}$'s are independent and $\E\big(Y_m\big|\mathscr{B}\big)=0$.

On the other hand,  by Skorohod's theorem, we have
    \begin{eqnarray*}
      &&\E\big[(\sigma_m^{\varepsilon,\theta})^{2p}\big] = \E\Big[\E\big[(\sigma_m^{\varepsilon,\theta})^{2p}\big|\mathscr{B}\big]\Big]
      \leq \E\Big[2(2p)!\,\E\big[(k_i\xi_m^{\varepsilon,\theta})^{4p}\big|\mathscr{B}\big]\Big] \nonumber \\
      && \leq  2(2p)!\,\E\bigg[(4p)!\left(\frac{\varepsilon}{2}\right)^{4p}\big(\cos(b_{m-1}\theta)\big)^{4p}\bigg]
      \leq 2(2p)!\,(4p)!\left(\frac{\varepsilon}{2}\right)^{4p}.
    \end{eqnarray*}
So, using the inequality $|a+b|^{2p}\leq2^{2p}(|a|^{2p}+|b|^{2p})$, we obtain
     \begin{eqnarray} \nonumber
\E\Big(Y_m^{2p}\Big) &\leq& 2^{2p}\left[ \bigg(\frac{4}{\varepsilon^2}\bigg)^{2p}\E\big[(\sigma_m^{\varepsilon,\theta})^{2p}\big]+2^{2p}\,\E\big[\big(\cos(b_{m-1}\theta)\big)^{4p}\big] \right] \\
      & \leq & 2^{2p}\left[ \bigg(\frac{4}{\varepsilon^2}\bigg)^{2p}2(2p)!\,(4p)!\left(\frac{\varepsilon}{2}\right)^{4p}+2^{2p} \right] \nonumber \\
      &\leq& 2^{2p}\left[ 2(2p)!\,(4p)!+2^{2p} \right]
      \leq 2^{2p+1}\cdot2(2p)!\,(4p)! \nonumber
     \\ &  = & 4\cdot2^{2p}(2p)!\,(4p)!.\label{eq6}
    \end{eqnarray}
 Finally  Lemma \ref{lemF} yields that, for $\displaystyle p\leq1+\frac{\log2}{\log\big[1+\varepsilon(1-\varepsilon^2/4)^\frac12\big]}$,
    \begin{eqnarray}\label{eq7}
      \sum_{\begin{subarray}{c} |u|=2p\\ u_i\neq1\,\forall i \end{subarray}}{2p\choose u} \leq 2^{2p}(2p)!\left(\frac{4}{\varepsilon^2}\right)^p.
    \end{eqnarray}

  Therefore, for $p$ as above,  putting together (\ref{eq4}), (\ref{eq5}), (\ref{eq6}) and (\ref{eq7}) and  applying Stirling formula, $k!=\sqrt{2\pi}\,k^{k+\frac12}e^{-k}e^{\frac{a}{12k}}$, with $0<a<1$, we obtain
    \begin{eqnarray*}
     I_{12}^\varepsilon &\leq& 4\,(\delta_\varepsilon)^{-2p} \,\varepsilon^{2p} \,2^{4p}\big((2p)!\big)^2\,(4p)! \\  &\leq& 4\,(\delta_\varepsilon)^{-2p} \,\varepsilon^{2p} \,2^{4p} \Big[\sqrt{2\pi}(2p)^{2p+\frac12}e^{-2p}e^{\frac{a}{24p}}\Big]^2 \Big[\sqrt{2\pi}(4p)^{4p+\frac12}e^{-4p}e^{\frac{a}{48p}}\Big] \\
      &=& (\delta_\varepsilon)^{-2p} \,\varepsilon^{2p} \,2^{16p+4} \,(2\pi)^{\frac32} \,e^{-8p+\frac{a}{12p}+\frac{a}{48p}} \,p^{8p+\frac32} \\
      &\leq& K_1^p \,(\delta_\varepsilon)^{-2p} \,\varepsilon^{2p} \,p^{8p+3/2}
    \end{eqnarray*}
  where $K_1$ is a constant.

  Let us impose now $ K_1^p \,(\delta_\varepsilon)^{-2p} \,\varepsilon^{2p} \,p^{8p+3/2}=\varepsilon^{2q}$ and $p=\big[\log{\frac1\varepsilon}\big]$. Observe that $p=\big[\log{\frac1\varepsilon}\big]$ fulfills the condition on $p$ of inequality (\ref{eq7}). We get
    \begin{eqnarray}\label{eq9}
      \delta_\varepsilon = K_2 \,\varepsilon^{1-q/[\log{1/\varepsilon}]}
      \,\left[\log{\frac1\varepsilon}\right]^{4+3/(4[\log{1/\varepsilon}])},
    \end{eqnarray}
  where $K_2=\sqrt{K_1}$ is a constant. Clearly, with this $\delta_\varepsilon$,

 \medskip

{\it 2.2. Study of the term $I_{13}^\varepsilon.$}
 As  in Section \ref{cap_realp}, since $n\leq m-1$, we can write
    \begin{eqnarray*}
      I_{13}^\varepsilon &\leq& P\left( \max_{0\leq n\leq\frac{4}{\varepsilon^2}-1} \frac{\varepsilon^2}{4} \Bigg|\sum_{k=0}^n (T_k-2)\cos(2k\theta)\Bigg|>\frac{\delta_\varepsilon}{8} \right) \\
      &&+\, \1_{\left\{\max_{0\leq n\leq\frac{4}{\varepsilon^2}-1} \frac{\varepsilon^2}{2} \Bigg|\sum_{k=0}^n \cos(2k\theta)\Bigg|>\frac{\delta_\varepsilon}{8} \right\}}:=  I_{131}^\varepsilon+ I_{132}^\varepsilon.
    \end{eqnarray*}

Let us begin studying $I_{132}^\varepsilon$. By  (\ref{eq9}),
    \begin{eqnarray*}
     I_{132}^\varepsilon &\leq& \1_{\left\{ \frac{\varepsilon^2}{2}K>\frac{\delta_\varepsilon}{8} \right\}}= \1_{\left\{ \frac{\varepsilon^2}{2}K>\frac{K_2}{8}\,\varepsilon^{1-q/[\log{1/\varepsilon}]} \,\left[\log{\frac1\varepsilon}\right]^{4+3/(4[\log{1/\varepsilon}])} \right\}} \\
      &=& \1_{\left\{ \varepsilon^{-1-q/[\log{1/\varepsilon}]}
      \,\left[\log{\frac1\varepsilon}\right]^{4+3/(4[\log{1/\varepsilon}])}<\frac{4K}{K_2}\right\}},
    \end{eqnarray*}
  and clearly  $I_{132}^\varepsilon=0$ for small $\varepsilon$.

On the other hand,  since $M'_n:=\frac{\varepsilon^2}{4}\sum_{k=0}^n(T_k-2)\cos(2k\theta)$ is a martingale (see Section \ref{cap_realp}), by Doob's martingale inequality,
    \begin{eqnarray*}
       I_{131}^\varepsilon &=& P\left( \max_{0\leq n\leq\frac{4}{\varepsilon^2}-1} \Bigg|\sum_{k=0}^n (T_k-2)\cos(2k\theta)\Bigg|>\frac{\delta_\varepsilon}{2\varepsilon^2} \right) \\
      &\leq& \bigg(\frac{2\varepsilon^2}{\delta_\varepsilon}\bigg)^{2p} \,\E \left[\Bigg(\sum_{k=0}^{[4/\varepsilon^2]-1}
      (T_k-2)\cos(2k\theta)\Bigg)^{2p}\right].
    \end{eqnarray*}
 Set $U_k:=(T_k-2)\cos(2k\theta)$. $U_k$'s are independent and centered random variables and by  H\"older's inequality, we have
    $$ \E\left[\Bigg(\sum_{k=0}^{[4/\varepsilon^2]-1}U_k\Bigg)^{2p}\right] \leq \sum_{\begin{subarray}{c} |u|=2p\\ u_i\neq1\,\forall i \end{subarray}}{2p\choose u} \big[\E\big(U_0^{2p}\big)\big]^{u_0/2p}\cdots\big[\E\big(U_{[4/\varepsilon^2]-1}^{2p}\big)\big]^{u_{[4/\varepsilon^2]-1}/2p}. $$
Let us recall that $T_k\sim\mbox{Geom}(\frac12)$. It is well-known that  $T_k=[\widetilde{T}_k]+1$ where $\widetilde{T}_k\sim\mbox{Exp}(\log2)$.  Hence
   $$
      \E\big(T_k^{2p}\big) \leq \E\big[(\widetilde{T}_k+1)^{2p}\big]
      \leq 2^{2p} \big[\E\big(\widetilde{T}_k^{2p}\big)+1\big]
      \leq 2^{2p} \left[\frac{(2p)!}{(log2)^{2p}}+1\right]
      \leq 2 \,(2p)! \,(4p)!,
    $$
and it follows that
    \begin{eqnarray*}
      &&\E\Big(U_k^{2p}\Big) \leq 2^{2p} \left[ \E\big[\big(T_k\cos(2k\theta)\big)^{2p}\big]+2^{2p}\,\big[\cos(2k\theta)\big]^{2p} \right] \\
      &\leq& 2^{2p}\big[\cos(2k\theta)\big]^{2p} \left( \E\big(T_k^{2p}\big)+2^{2p} \right)
     \leq4\cdot2^{2p}(2p)!\,(4p)! .
    \end{eqnarray*}
Some  inequalities are very crude, but they are helpful since we get the same bounds that in the study of  $I_{12}^\varepsilon.$
Thus, with $\delta_\varepsilon$ as in (\ref{eq9}) we get that $I_{131}^\varepsilon=o(\varepsilon^q)$.

 \medskip

{\it 2.3. Study of the term $I_{14}^\varepsilon$.}  Since $T_k$'s are independent identically distributed random variables with $T_k\sim\mbox{Geom}\big(\frac12\big)$ for each $k$, by (\ref{eq13}),
    \begin{eqnarray*}
     I_{14}^\varepsilon &\leq& P\left( \max_{0\leq n\leq\frac{4}{\varepsilon^2}-1} \,\frac{\varepsilon^2}{4} \,T_{n+1}>\frac{\delta_\varepsilon}{4} \right)
      = 1-P\left( \max_{1\leq n\leq\frac{4}{\varepsilon^2}} T_n \leq \frac{\delta_\varepsilon}{\varepsilon^2} \right) \\
      &=& 1-P\left[\Big( T_n \leq \frac{\delta_\varepsilon}{\varepsilon^2} \Big)\right]^{4/\varepsilon^2}
      \!\!\!\!\!\!\!\!= 1-\Bigg( \sum_{k=1}^{[\delta_\varepsilon/\varepsilon^2]} \frac{1}{2^k} \Bigg)^{4/\varepsilon^2}
      \!\!\!\!\!\!\!\!= 1-\Bigg( 1-\bigg(\frac12\bigg)^{[\delta_\varepsilon/\varepsilon^2]} \Bigg)^{4/\varepsilon^2} \\
      &=& 1-\Big( 1-e^{-\log2\,[\delta_\varepsilon/\varepsilon^2]} \Big)^{4/\varepsilon^2}
      \leq 1-\Big( 1-e^{-\frac{a_\varepsilon}{2\varepsilon}} \Big)^{4/\varepsilon^2 + 1}.
    \end{eqnarray*}
Notice that in the last inequality we have used that $\frac{a_\varepsilon}{2 \varepsilon} \leq \log 2 [\frac{\delta_\varepsilon}{\varepsilon^2}]$.
  The bound for $I_{14}^\varepsilon$ is the same that we got in the study of $I_{4}^\varepsilon$. So, by the same arguments, we  can conclude that $I_{14}^\varepsilon=o(\varepsilon^q)$.
\medskip

{\it 2.4. Study of the term $I_{11}^\varepsilon.$} As in  Theorem 1 in \cite{art G-G}, for  small $\varepsilon$ and using a Doob's martingale inequality for Brownian motion we get
    \begin{eqnarray*}
      I_{11}^\varepsilon &\leq& \sum_{m=0}^{[4/\varepsilon^2]} P\left(\max_{|s|\leq\delta_\varepsilon} \Big|x\Big(\frac{m\varepsilon^2}{4}+s\Big)-x\Big(\frac{m\varepsilon^2}{4}\Big)\Big|>\frac{a_\varepsilon}{4}\right)\nonumber \\
      &=& \Big(\frac{4}{\varepsilon^2}+1\Big)  P\left(\max_{|s|\leq\delta_\varepsilon} \big|x(s)\big|>\frac{a_\varepsilon}{4}\right)
      \leq \frac{32}{\varepsilon^2}  P\left(\max_{0\leq s\leq\delta_\varepsilon} x(s)>\frac{a_\varepsilon}{4}\right) \nonumber\\
      &\leq& \frac{32}{\varepsilon^2}  \exp\left(-\Big(\frac{a_\varepsilon}{4}\Big)^2\frac{1}{2\delta_\varepsilon}\right)
      = \frac{32}{\varepsilon^2} \,\exp\left(-\frac{(a_\varepsilon)^2}{32\delta_\varepsilon}\right).
    \end{eqnarray*}

Condition
$(32/\varepsilon^2)\exp\big(-(a_\varepsilon)^2/(32\delta_\varepsilon)\big) \leq 32\,\varepsilon^{2q}$ yields that
$$
      a_\varepsilon \ge  K_3 \,\varepsilon^{1/2} \,\varepsilon^{-q/2[\log{1/\varepsilon}]} \,\bigg[\log{\frac1\varepsilon}\bigg]^{2+3/8[\log{1/\varepsilon}]}
      \,\bigg(\log{\frac1\varepsilon}\bigg)^{1/2},
$$
  where $K_3$ is a constant depending on $q$. Notice that $$ a_\varepsilon = \alpha \,\varepsilon^{1/2} \,\bigg(\log{\frac1\varepsilon}\bigg)^{5/2}, $$
  for small $\varepsilon$, where $\alpha$ is a constant that depends on $q$, satisfies such a condition. Thus,
with $\delta_\varepsilon$ as in (\ref{eq9}), it follows that $ I_{11}^\varepsilon=o(\varepsilon^q)$.

\bigskip

{\it 3. Study of the term $I_{2}^\varepsilon$.}
For our $\delta_\varepsilon>0$, we have
    \begin{eqnarray*}
    I_{2}^\varepsilon
      &\leq& P\left( \max_{0\leq m\leq\frac{4}{\varepsilon^2}} \, \max_{|s|\leq\delta_\varepsilon} \Big|x\Big(\frac{m\varepsilon^2}{4}+s\Big)-x\Big(\frac{m\varepsilon^2}{4}\Big)\Big|>\frac{a_\varepsilon}{4} \right) \\
      && + P\left( \max_{0\leq m\leq\frac{4}{\varepsilon^2}} \Big|\Gamma_m^{\varepsilon,\theta}-\frac{m\varepsilon^2}{4}\Big|>\delta_\varepsilon \right)
      = I_{21}^\varepsilon + I_{22}^\varepsilon.
    \end{eqnarray*}

  On one hand, observe that $ I_{21}^\varepsilon = I_{11}^\varepsilon$, thus $ I_{21}^\varepsilon=o(\varepsilon^q)$.

On the other hand, it is easy to check that $\sum_{j=0}^m(\gamma_j^{\varepsilon,\theta}-\frac{\varepsilon^2}{4})$ is a martingale. So
applying  Doob's martingale inequality
    \begin{eqnarray*}
       I_{22}^\varepsilon
      &=& P\left( \max_{0\leq m\leq\frac{4}{\varepsilon^2}} \Bigg|\sum_{j=0}^m\bigg(\frac{4}{\varepsilon^2}\gamma_j^{\varepsilon,\theta}-1\bigg)\Bigg|>\frac{4\delta_\varepsilon}{\varepsilon^2} \right) \\
      &\leq& \left(\frac{\varepsilon^2}{4\delta_\varepsilon}\right)^{2p} \,
      \E\left[\left(\sum_{m=1}^{4/\varepsilon^2}\bigg(\frac{4}{\varepsilon^2}\gamma_m^{\varepsilon,\theta}-1\bigg)\right)^{2p}\right].
    \end{eqnarray*}
Set $V_m:=\frac{4}{\varepsilon^2}\gamma_m^{\varepsilon,\theta}-1$. Notice that $V_m$'s are independent and centered random variables with
    \begin{eqnarray*}
      \E\Big(V_m^{2p}\Big) &\leq& 2^{2p}\left( \bigg(\frac{4}{\varepsilon^2}\bigg)^{2p}\E\big[(\gamma_m^{\varepsilon,\theta})^{2p}\big]+1 \right) \\
      &\leq& 2^{2p}\big((2p)!+1\big)
      \leq 2^{2p+1}(2p)!
      \leq 4\cdot2^{2p}(2p)!\,(4p)!.
    \end{eqnarray*}
Then using an inequality of the type of (\ref{eq5}) and following the same arguments that in the study
of $I_{12}^\varepsilon$, we get that $ I_{22}^\varepsilon=o(\varepsilon^q)$.
\bigskip

{\it 4. Study of the term $I_{3}^\varepsilon$.}
 For $\delta_\varepsilon>0$  defined in (\ref{eq9}) and $a_\varepsilon$  of the type $\alpha\,\varepsilon^\frac12\left(\log\frac{1}{\varepsilon}\right)^\frac52$
    \begin{eqnarray*}
     I_{3}^\varepsilon&\leq& P\left( \max_{0\leq m\leq\frac{4}{\varepsilon^2}} \,\max_{|r|\leq\delta_\varepsilon} \big|x(\Gamma_m^{\varepsilon,\theta})-x(\Gamma_m^{\varepsilon,\theta}+r)\big|>\frac{a_\varepsilon}{4} \right) \\
      && + P\left( \max_{1\leq m\leq\frac{4}{\varepsilon^2}+1} \gamma_m^{\varepsilon,\theta}>\delta_\varepsilon \right)
      := I_{31}^\varepsilon + I_{32}^\varepsilon.
    \end{eqnarray*}

 On one hand, $ I_{31}^\varepsilon=o(\varepsilon^q)$ is proved in the same way as  $ I_{11}^\varepsilon$.

 On the other hand,
    \begin{eqnarray*}
       I_{32}^\varepsilon&=& 1-\left[ P\big(\gamma_m^{\varepsilon,\theta}\leq\delta_\varepsilon\big) \right]^{(4/\varepsilon^2)+1}
      = 1-\Big( 1-e^{-4\delta_\varepsilon/\varepsilon^2}
      \Big)^{(4/\varepsilon^2)+1}.
    \end{eqnarray*}
  Thus  $ I_{32}^\varepsilon=o(\varepsilon^q)$,  similarly as we have proved for $ I_{4}^\varepsilon$.

\bigskip
We have checked now that all the terms in our decomposition are of order $\varepsilon^q$.  More precisely
we have proved that for $a_\varepsilon=\alpha\varepsilon^{\frac12}\big(\log\frac{1}{\varepsilon}\big)^{\frac52}$ and
  $q>0$,
    $$ E := P\left( \max_{0\leq m\leq\frac{4}{\varepsilon^2}} \,\max_{0\leq r\leq\gamma_{m+1}^{\varepsilon,\theta}} \big|x_\varepsilon^\theta(\Gamma_m^{\varepsilon,\theta}+r)-x(\Gamma_m^{\varepsilon,\theta}+r)\big| > a_\varepsilon \right) = o(\varepsilon^q). $$
Let us fix $0<v<1$. Then
    \begin{eqnarray*}
      && P\left( \max_{0\leq t\leq 1-v} \,\big|x_\varepsilon^\theta(t)-x(t)\big| > a_\varepsilon \right) \\
      & \leq& P\left( \max_{0\leq t\leq 1-v} \,\big|x_\varepsilon^\theta(t)-x(t)\big|>a_\varepsilon, \,\Gamma_{4/\varepsilon^2}^{\varepsilon,\theta}\geq 1-v \right) + P\left( \Gamma_{4/\varepsilon^2}^{\varepsilon,\theta} < 1-v \right) \\
      &:=&  E_1 + E_2.
    \end{eqnarray*}

  On one hand, since $E_1\leq E$, we get that  $E_1=o(\varepsilon)$. On the other hand,
    $$ E_2 \leq P\left( \max_{0\leq m\leq\frac{4}{\varepsilon^2}} \,\bigg|\Gamma_m^{\varepsilon,\theta}-\frac{m\varepsilon^2}{4}\bigg|>v \right) \leq I_{22}^\varepsilon, $$
  for small $\varepsilon$. Thus, $E_2=o(\varepsilon^q)$.

We have proved the rate of convergence results  in the interval $[0,1-v]$, but we can extend the argument for any compact interval. So, the proof of Theorem \ref{thm_rate} is completed.

\hfill$\square$

\section{Appendix}

We begin this appendix recalling two technical lemmas that will be useful in our computations.

\begin{prop}\label{2poisson}
Let $\{M_t,t\geq0\}$ be a Poisson process of parameter 2. Set $\{N_t,t\geq0\}$ and $\{N'_t,t\geq0\}$ two other counter processes
  that, at each jump of $M$, each of them jumps or  does not jump with probability $\frac12$, independently of the jumps of the other process and of its past.Then $N$ and $N'$ are Poisson processes of parameter 1 with independent increments on disjoint   intervals.
\end{prop}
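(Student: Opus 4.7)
The strategy is to recognise the construction as a classical Poisson thinning. Attach to each jump of $M$ two independent Bernoulli$(1/2)$ labels, one deciding whether the jump contributes to $N$ and the other whether it contributes to $N'$. This splits the jumps of $M$ into four disjoint categories (in both, in $N$ only, in $N'$ only, in neither), each occurring with probability $1/4$. By the colored thinning theorem for Poisson processes, the four corresponding counting processes $M_{11},M_{10},M_{01},M_{00}$ are then independent Poisson processes of rate $2\cdot\frac14=\frac12$. Hence $N=M_{11}+M_{10}$ and $N'=M_{11}+M_{01}$ are each sums of two independent rate-$\frac12$ Poisson processes, so both are Poisson of parameter $1$, and in particular each inherits the independent-increments property on disjoint intervals. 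For the mixed assertion about $N$ and $N'$ on disjoint intervals $I$ and $J$, note that $N(I)$ is a function of $(M_{11}(I),M_{10}(I))$ while $N'(J)$ is a function of $(M_{11}(J),M_{01}(J))$, and these two pairs are independent because $M_{11},M_{10},M_{01}$ have independent increments on disjoint intervals.

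If instead one prefers a self-contained verification avoiding a black-box appeal, I would argue by direct computation. For disjoint intervals $[s_i,t_i]$ and integers $n_i$, condition on $M_{t_i}-M_{s_i}=k_i$: by construction the increments $N_{t_i}-N_{s_i}$ are then conditionally independent Binomial$(k_i,1/2)$; and the $M_{t_i}-M_{s_i}$ are themselves independent Poisson$(2(t_i-s_i))$ since $M$ has rate $2$. Summing out the $k_i$'s,
\[ P\Bigl(\bigcap_i\{N_{t_i}-N_{s_i}=n_i\}\Bigr)=\prod_i\sum_{k\ge n_i}e^{-2(t_i-s_i)}\frac{[2(t_i-s_i)]^{k}}{k!}\binom{k}{n_i}\frac{1}{2^{k}}=\prod_i e^{-(t_i-s_i)}\frac{(t_i-s_i)^{n_i}}{n_i!}, \]
which simultaneously yields the Poisson$(1)$ law of each increment of $N$ and their independence. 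The same computation applies verbatim to $N'$, and to $(N,N')$ jointly when the intervals are disjoint.

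No step is substantively difficult; this is a textbook application of Poisson thinning. The only point requiring some care is the measure-theoretic setup of the coin-flip mechanism, so that the conditional distributions above are genuinely independent binomials and independent across disjoint intervals of $M$. This is implicit in the prescription ``at each jump of $M$, each of $N,N'$ jumps or does not jump with probability $1/2$, independently of the other process and of the past'', and formalising it is a routine construction on a suitable product space.
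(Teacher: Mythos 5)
Your proposal is correct, and it actually contains two arguments. The second one (condition on the increments of $M$, use that the $N$-increments are then conditionally independent Binomial$(k_i,\tfrac12)$ variables, and sum out the Poisson weights) is essentially the paper's own proof: the paper performs exactly this computation to get $P(N_t-N_s=k)=e^{-(t-s)}(t-s)^k/k!$, then factorizes joint probabilities by conditioning on the $M$-increments, and handles the mixed statement about $N$ and $N'$ on disjoint intervals "by similar arguments", just as you do. Your first argument is a genuinely different and more structural route: splitting the jumps of $M$ into the four categories (both, $N$ only, $N'$ only, neither) and invoking the colouring/thinning theorem gives four independent Poisson processes of rate $\tfrac12$, from which $N=M_{11}+M_{10}$ and $N'=M_{11}+M_{01}$ are Poisson of parameter $1$ by superposition, and the independence of $N(I)$ and $N'(J)$ for disjoint $I,J$ falls out immediately because they are functions of disjoint-interval increments of the independent coloured processes. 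What the thinning route buys is transparency and strength: it isolates exactly where the dependence between $N$ and $N'$ lives (the shared component $M_{11}$, so they are \emph{not} independent on a common interval, consistent with the proposition claiming independence only on disjoint intervals), and it gives the full joint finite-dimensional structure at once rather than increment by increment. What the direct computation buys is self-containedness (no appeal to a black-box theorem) and it produces exactly the conditional-binomial identity that the paper reuses later in Lemma \ref{lematecnic}. Your closing caveat about formalising the coin-flip mechanism on a product space is the right point of care and matches the level of rigour the paper itself adopts.
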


\begin{proof} Let us check first that $N$ is a Poisson process of parameter 1. Clearly
  $N_0=0$ and for any $0\leq s<t$ and for $k\in\N\cup\{0\}$,
          \begin{eqnarray*}
            P(N_t-N_s=k) &=& \sum_{n=k}^\infty P(N_t-N_s=k\,|\,M_t-M_s=n)P(M_t-M_s=n) \\
            &=& \sum_{n=k}^\infty {n\choose k} \frac{1}{2^k} \frac{1}{2^{n-k}} \frac{[2(t-s)]^n}{n!}  e^{-2(t-s)} \\
            &=& e^{-(t-s)}\,\frac{(t-s)^k}{k!}.
          \end{eqnarray*}
    Finally,   for any $n\geq0$ and for any $0\leq t_1<\cdots<t_{n+1}$, it holds that the increments $N_{t_2}-N_{t_1},\dots,N_{t_{n+1}}-N_{t_n}$ are independent random
        variables. Indeed,  consider $k_i\in\N\cup\{0\}$, using the independence of  the increments of the Poisson process $M$ we
get that
         \begin{eqnarray*}
            &&P(N_{t_2}-N_{t_1}=k_1,\dots,N_{t_{n+1}}-N_{t_n}=k_n \,|\, M_{t_2}-M_{t_1}=m_1, \\
&& \qquad\qquad \dots,M_{t_{n+1}}-M_{t_n}=m_n) \\
            &&\quad =\prod_{i=1}^n \,P(N_{t_{i+1}}-N_{t_i}=k_i \,|\,
            M_{t_{i+1}}-M_{t_i}=m_i).
          \end{eqnarray*}
   Then
      \begin{eqnarray*}
            &&\!\!\!\!\! P(N_{t_2}-N_{t_1}=k_1,\dots,N_{t_{n+1}}-N_{t_n}=k_n) \\
            &=&\!\!\!\!\!\!\!\!\!\!\sum_{\begin{subarray}{c} m_i=0 \\ i=1,\dots,n\end{subarray}}^\infty \!\!\!\!\! P(N_{t_2}-N_{t_1}\!\!=k_1,\dots,N_{t_{n+1}}-N_{t_n}\!\!=k_n \,|\, M_{t_2}-M_{t_1}\!\!=m_1, \\
&& \qquad\qquad\qquad \dots,M_{t_{n+1}}-M_{t_n}\!\!=m_n) \\
            &&\!\!\!\!\!\times \,P(M_{t_2}-M_{t_1}=m_1,\dots,M_{t_{n+1}}-M_{t_n}=m_n) \\
            &=&\!\!\!\!\! P(N_{t_2}-N_{t_1}=k_1) \cdots
            P(N_{t_{n+1}}-N_{t_n}=k_n).
          \end{eqnarray*}

  Using similar arguments we can prove that   for any $k,j\in\N\cup\{0\}$, and for $0\leq s<t<u<v$,
 $$ P(N_t-N_s=k,N'_v-N'_u=j) =P(N_t-N_s=k)P(N'_v-N'_u=j).$$
Thus   $N$ and $N'$ have independent increments on disjoint intervals.
\end{proof}

\begin{lema}\label{serie}
For any $\delta >0$
$$
\sum_{n=1}^\infty \left[ 1-\left( 1-\frac{1}{2^{[\delta n]}} \right)^n \right] <\infty.
$$
\end{lema}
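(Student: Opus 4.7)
The plan is to bound each summand by an exponentially decaying quantity and then compare with a convergent series. First, I would apply the elementary inequality $1 - (1-x)^n \leq nx$ for $x \in [0,1]$, which follows from Bernoulli's inequality (or by induction on $n$). With $x = 2^{-[\delta n]}$ this yields
$$1 - \left(1 - \frac{1}{2^{[\delta n]}}\right)^n \leq \frac{n}{2^{[\delta n]}}.$$

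Next I would remove the integer-part by using $[\delta n] \geq \delta n - 1$, giving $2^{[\delta n]} \geq \tfrac{1}{2}\, 2^{\delta n}$, and hence
$$1 - \left(1 - \frac{1}{2^{[\delta n]}}\right)^n \leq \frac{2n}{2^{\delta n}}.$$

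Finally I would note that $\sum_{n \geq 1} \tfrac{2n}{2^{\delta n}}$ converges, since the ratio of consecutive terms tends to $2^{-\delta} < 1$ (or equivalently by the root test, since $(n)^{1/n}\to 1$ while $2^{-\delta}<1$). The comparison test then gives the claimed convergence. There is no real obstacle here; the only point that requires a sentence of care is the step replacing $[\delta n]$ by $\delta n - 1$, to ensure the bound is uniform in $n$ and to cover the case of small $\delta$ where $[\delta n]$ could be $0$ for the first few $n$ (those finitely many terms contribute a finite amount and may be discarded or absorbed into the constant).
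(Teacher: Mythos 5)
Your proof is correct, and it takes a genuinely different and more elementary route than the paper. You bound each term directly via Bernoulli's inequality, $1-(1-x)^n\leq nx$ for $x\in[0,1]$, which with $x=2^{-[\delta n]}$ and the estimate $[\delta n]\geq \delta n-1$ gives the summand bound $2n\,2^{-\delta n}$; comparison with the convergent series $\sum_n 2n\,2^{-\delta n}$ finishes the argument. The paper instead keeps the full expression: after the same replacement of $[\delta n]$ by $\delta n-1$ it expands $\bigl(1-2^{-(\delta n-1)}\bigr)^n$ by the binomial theorem, interchanges the order of summation, evaluates the inner sum with the identity $\sum_{n\geq k}\binom{n}{k}x^{n-k}=(1-x)^{-(k+1)}$, and then applies d'Alembert's ratio test to the resulting single series in $k$. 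Both arguments are complete; yours is shorter and avoids the double-sum manipulation entirely, at the cost of a cruder (but entirely sufficient) bound on each term, while the paper's computation retains more of the exact structure of the summand than is actually needed for mere convergence. Your closing remark about small $n$ when $[\delta n]=0$ is not even necessary, since the inequality $2^{[\delta n]}\geq 2^{\delta n-1}$ holds for all $n$, but it does no harm.
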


\begin{proof}
We have
    \begin{eqnarray*}
     && \sum_{n=1}^\infty \left[ 1-\left( 1-\frac{1}{2^{[\delta n]}} \right)^n \right]
     \leq \sum_{n=1}^\infty \left[ 1-\left( 1-\frac{1}{2^{\delta n-1}} \right)^n \right]\\
     && \quad = \sum_{n=1}^\infty \frac{2^{(\delta n-1)n}-(2^{\delta n-1}-1)^n}{2^{(\delta n-1)n}} \\
      &&  \quad = \sum_{n=1}^\infty \frac{1}{2^{(\delta n-1)n}} \sum_{k=1}^n {n\choose k}(-1)^{k+1}\,2^{(\delta n-1)(n-k)}\\
&&  \quad
      = \sum_{n=1}^\infty \sum_{k=1}^n {n\choose k}(-1)^{k+1}\,2^{-k(\delta n-1)}\\
 &&  \quad \leq     \sum_{n=1}^\infty \sum_{k=1}^n {n\choose k}2^{-k(\delta n-1)}
      = \sum_{k=1}^\infty 2^{-k(\delta k-1)} \sum_{n=k}^\infty {n\choose k}2^{-\delta k(n-k)}.
 \end{eqnarray*}
Using that $\displaystyle\sum_{n=k}^\infty{n\choose k}x^{n-k}=\frac{1}{(1-x)^{k+1}}$, we can bound the above expression by
$$ \sum_{k=1}^\infty \left( \frac{2^{-(\delta k-1)}}{1-2^{-\delta k}} \right)^k \cdot\frac{1}{1-2^{-\delta k}}
      \leq \frac{1}{1-2^{-\delta}}\cdot\sum_{k=1}^\infty \left( \frac{2^{-(\delta k-1)}}{1-2^{-\delta k}}
      \right)^k.$$
  Finally, we use d'Alembert's ratio test to check the convergence of this series
    $$ \lim_{k\rightarrow\infty} \frac{\left(\frac{2^{-\delta(k+1)+1}}{1-2^{-\delta(k+1)}}\right)^{k+1}}{\left(\frac{2^{-\delta k+1}}{1-2^{-\delta k}}\right)^k} = \lim_{k\rightarrow\infty} \,\frac{2^{1-\delta}\cdot2^{-\delta k}}{1-2^{-\delta}\cdot2^{-\delta k}} \left( 2^{-\delta}\cdot\frac{1-2^{-\delta k}}{1-2^{-\delta(k+1)}} \right)^k = 0 . $$
\end{proof}

Finally, for completeness we also recall a Lemma of  \cite{art G-G} (page 298).

\begin{lema}\label{lemF}
Let $F(k,n)=$ number of ways of putting $k$ balls into $n$ boxes so that no box contains exactly one ball, i.e.,
$$F(k,n)=\sum_{\begin{subarray}{c} \alpha_1+\ldots+\alpha_n=k\\ \alpha_i\neq1\,\forall i \end{subarray}}
\frac{k !}{ \alpha_1! \ldots\alpha_n!}.$$
Then $F(k,n) \le 2^k k! n^{k/2}$ for $k \le 2 +\log4/\log[1+2(1/n-1/n^2)^\frac12].$
\end{lema}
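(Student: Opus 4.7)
The plan is a generating-function argument. Expanding $(e^x-x)^n = \bigl(\sum_{j\neq 1} x^j/j!\bigr)^n$ by the multinomial theorem identifies
\[
F(k,n)\;=\;k!\,[x^k]\,(e^x-x)^n.
\]
Because this power series has non-negative Taylor coefficients, Cauchy's inequality yields, for every $r>0$,
\[
F(k,n)\;\le\;k!\,\frac{(e^r-r)^n}{r^k}.
\]
I would then specialize to $r = 2(1/n-1/n^2)^{1/2} = 2\sqrt{n-1}/n$, which is precisely the quantity appearing in the hypothesis and satisfies $r<1$ for every $n\ge 2$. For this range of $r$ one has the elementary Taylor estimate $e^r-r\le 1+(r^2/2)e^r$ (indeed, comparing coefficients, $r^j/j!\le r^{j-2}/(2(j-2)!)\cdot r^2$ iff $j(j-1)\ge 2$), which bounds $(e^r-r)^n$ by a constant of order $e^{2e}$ uniformly in $n\ge 2$.

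Combining this with the identity $1/r^k = n^{k/2}/\bigl(2\sqrt{1-1/n}\bigr)^k$, we get
\[
F(k,n)\;\le\;k!\,\frac{C}{\bigl(2\sqrt{1-1/n}\bigr)^k}\,n^{k/2}
\]
for some universal constant $C$. The hypothesis $k\le 2+\log 4/\log(1+r)$ rewrites as $(1+r)^{k-2}\le 4$, and it is exactly this bound (applied after a comparison between $2\sqrt{1-1/n}$ and $1+r$) that absorbs both the constant $C$ and the $(1-1/n)^{-k/2}$ correction into a clean factor $2^k$. One thus recovers $F(k,n)\le 2^k k!\,n^{k/2}$.

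The main obstacle is this final bookkeeping step: the factors of $2$ and the $(1-1/n)$ corrections arising from our specific choice of $r$ must be tracked exactly so that the leading constant comes out to $2^k$ and not to a slightly worse $3^k$ or $4^k$, and so that the sharp threshold on $k$ takes the form $2+\log 4/\log(1+r)$ rather than a looser expression. This verification is carried out in detail in Gorostiza--Griego \cite{art G-G}, and I would invoke it directly, reproducing only those manipulations specific to the present setting.
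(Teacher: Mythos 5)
The paper itself offers no proof of this lemma: it is quoted verbatim from Gorostiza--Griego \cite{art G-G} (p.~298), so ending with an appeal to that reference is, by itself, consistent with what the authors do. The problem is that you present that appeal as routine ``final bookkeeping'' downstream of your own reductions, and those reductions do not in fact lead to the stated bound. Your opening moves are sound: $F(k,n)=k!\,[x^k](e^x-x)^n$ and, by positivity of the coefficients, $F(k,n)\le k!\,(e^r-r)^n r^{-k}$ for every $r>0$. But with your choice $r=2\sqrt{n-1}/n$ you arrive at $F(k,n)\le C\,k!\,n^{k/2}\bigl(2\sqrt{1-1/n}\,\bigr)^{-k}$ with $C$ of order $e^{2e}\approx 230$, so the target $F(k,n)\le 2^k k!\,n^{k/2}$ would require $C\le 4^k(1-1/n)^{k/2}$. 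That fails for every $k\le 3$, and such $k$ are always admissible since $2+\log 4/\log(1+r)\ge 4$. Moreover the mechanism you invoke to save it is backwards: the hypothesis $k\le 2+\log 4/\log(1+r)$, i.e.\ $(1+r)^{k-2}\le 4$, is an \emph{upper} bound on $k$, so it can only tame quantities that grow with $k$; it cannot absorb a fixed constant $C$ into $4^k$, which would need $k$ bounded \emph{below}. (Also, $r=1$, not $r<1$, when $n=2$.) So the decisive step --- producing exactly $2^k$ and explaining what the hypothesis on $k$ is for --- is precisely what is missing, and the sketch of how it would go is not repairable as written.

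If you want a self-contained argument along your lines, it exists and is in fact cleaner than the quoted statement: for $x\ge 0$ one has $e^x-x\le e^{x^2}$ (the difference is convex with vanishing value and derivative at $0$, since $2e^{x^2}\ge e^x$ always), hence $F(k,n)\le k!\,e^{nr^2}r^{-k}$ for all $r>0$; optimizing with $r=\sqrt{k/(2n)}$ gives $F(k,n)\le k!\,(2en/k)^{k/2}\le 2^k k!\,n^{k/2}$ for every $k\ge 2$, while $k\le 1$ is trivial because $F(0,n)=1$ and $F(1,n)=0$. This proves the inequality with no restriction on $k$ at all; alternatively, keep your fixed $r$ for large $k$ and check the finitely many small $k$ directly. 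As submitted, however, the proposal neither reproduces the Gorostiza--Griego argument nor completes its own, so there is a genuine gap.
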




\begin{thebibliography}{99}
\bibitem{B} Bardina, X. The complex Brownian motion as a weak limit of processes constructed from a Poisson process.
In: Stochastic Analysis and Related Topics VII. Proceedings of the 7th Silivri Workshop, Kusadasi 1998, pp. 149-158. {\it Progress in Probability, Birkh\"auser.}
(2001)
\bibitem{BJQ} Bardina, X., Jolis, M., Quer-Sardanyons, Ll. Weak convergence for the stochastic heat equation driven by Gaussian white noise. {\it Electron. J. Probab.}  {\bf 15}  (2010), no. 39, 1267-1295.
\bibitem{BNRT} Bardina, X., Nourdin, I., Rovira, C., Tindel, S. Weak approximation of a fractional SDE. {\it Stochastic Process. Appl.}  {\bf 120} (2010), no. 1, 39-65.
\bibitem{BR1}Bardina, X., Rovira, C. A d-dimensional Brownian motion as a weak limit from a one-dimesional Poisson process. {\it Lithuanian Mathematical Journal} {\bf 53} (2013), 17-26.
\bibitem{BR2} Bardina, X.; Rovira, C. Approximations of a Complex Brownian Motion by Processes Constructed from a L\'evy Process. To appear in {\it Mediterranean Journal of
Mathematics}. (2015)
\bibitem{CH}  Cs\"orgo, M.; Horv\'ath, L. Rate of convergence of transport processes with an application to stochastic differential equations. {\it Probab. Theory Related Fields} {\bf 78} (1988), no. 3, 379-387.
\bibitem{DJQ}  Deya, A.; Jolis, M.; Quer-Sardanyons, L. The Stratonovich heat equation: a continuity result and weak approximations. {\it Electron. J. Probab.} {\bf 18} (2013), no. 3, 34 pp.
\bibitem{DJ}Delgado, R. and Jolis, M. Weak approximation for a class of Gaussian processes. {\it J. Appl. Probab.} {\bf 37(2)}, 400-407, (2000).

 \bibitem{fel} Feller, W. (1966). \textit{An introduction to probability theory and its applications. Vol II.}
 John Wiley \& Sons.
\bibitem{GGL}  Garz\'on, J.; Gorostiza, L. G.; Le\'on, J. A. A strong uniform approximation of fractional Brownian motion by means of transport processes. {\it Stochastic Process. Appl.} {\bf 119} (2009), no. 10, 3435-3452.
\bibitem{GGL2}  Garz\'on, J.; Gorostiza, L. G.; Le\'on, J. {A strong approximation of subfractional Brownian motion by means of transport processes}. In: Malliavin calculus and stochastic analysis, 335--360, Springer Proc. Math. Stat., 34, Springer, New York, 2013.
\bibitem{GGL3}  Garz\'on, J.; Gorostiza, L. G.; Le\'on, J. {Approximations of Fractional Stochastic Differential Equations by means of transport processes}.
{\it Commun. Stoch. Anal.} {\bf 5(3)} (2011), 433-456.
\bibitem{art G-G2} Gorostiza, L.G. and Griego, R.J. (1979).
\textit{Strong approximation of diffusion processes by transport
processes.} Journal of Mathematics of Kyoto University 19, No. 1,
91-103.
\bibitem{art G-G} Gorostiza, L.G. and Griego, R.J. (1980). \textit{Rate of convergence of uniform transport processes to Brownian motion and application to stochastic integrals.} Stochastics, Vol. 3, 291-303.
\bibitem{art G-H-RM} Griego, R.J., Heath, D. and Ruiz-Moncayo, A. (1971). \textit{Almost sure convergence of uniform trasport processes to Brownian motion.} Ann. Math. Stat., Vol. 42, No. 3, 1129-1131.
\bibitem{K}
Kac, M.
A stochastic model related to the telegraphers equation
{\it Rocky Moutain J. Math.}  {\bf 4  }(1974),  497-509.
\bibitem{LD}  Li, Yuqiang; Dai, Hongshuai Approximations of fractional Brownian motion. {\it Bernoulli} {\bf 17} (2011), no. 4, 1195-1216.
\bibitem{sko} Skorokhod, A.V. (1965). \textit{Study in the Theory of Random Processes.} Addison-Wesley, Reading.
\bibitem{S} Stroock, D. Topics in Stochastic Differential Equations (Tata
Institute of Fundamental Research, Bombay.) {\it Springer Verlag.} (1982).

\end{thebibliography}
\end{document}